\newtheorem{Definition}{Definition}[section]
\newtheorem {Lemma}{Lemma}[section]
\newtheorem {Theorem} {Theorem}[section]
\newtheorem{Proposition}{Proposition}[section]
\newtheorem {Corollary}{Corollary}[section]
\begin{document}
\baselineskip 16pt

\title{The first few unicyclic and bicyclic hypergraphs with larger spectral radii
\footnote{This work was supported by the Hong Kong Research Grant Council (Grant Nos. PolyU 501212, 501913, 15302114 and 15300715) and NSF of Guangdong Province (Grant No.2014A030310413).}}

\author{Chen Ouyang\footnotemark[3] , Liqun Qi\footnote{Corresponding author. E-mail: liqun.qi@polyu.edu.cn}~\footnotemark[3], Xiying Yuan\footnotemark[4] \\
\footnotesize{\footnotemark[3] Department of Applied Mathematics, The Hong Kong Polytechnic University,} \\ \footnotesize{Hung Hom, Kowloon, Hong Kong}\\
\footnotesize{\footnotemark[4]  Department of Mathematics, Shanghai University, Shanghai 200444, China} }

\date{}
\maketitle


\begin{abstract}
A connected $k$-uniform hypergraph with $n$ vertices and $m$ edges is called $r$-cyclic if $n=m(k-1)-r+1$. For $r=1$ or $2$, the hypergraph is simply called unicyclic or bicyclic. In this paper we investigate hypergraphs that attain larger spectral radii among all simple connected $k$-uniform unicyclic and bicyclic hypergraphs. Specifically, by using some edge operations, the formula on power hypergraph eigenvalues, the weighted incidence matrix and a result on linear unicyclic hypergraphs, we determined the first five hypergraphs with larger spectral radius among all unicyclic hypergraphs and the first three over all bicyclic hypergraphs.
\\ \\
{\it Key words}: unicyclic hypergraph, bicyclic hypergraph, $k$-uniform hypergraph, adjacency tensor, spectral radius \\ \\
{\it AMS Classifications}:  15A42, 05C50
\end{abstract}
\section{Introduction}

In the past decade, the research on spectra of hypergraphs via tensors have  drawn increasingly extensive interest, accompanying with the rapid development of tensor spectral theory since the initial work of Qi~\cite{Qi05} and Lim~\cite{Lim05}.

Given an integer $k\ge 2$, a $k$-uniform hypergraph $H$ refers to a pair $(V,E)$ where $V$ is a non-empty finite set and $E$ is a family of $k$-sets of $V$.  If some element $e\in E$ or $E$ itself is a multi-set, then $H$ is called a multi-hypergraph. Otherwise, we call $H$ a simple hypergraph \cite{Ber76}. In the sequel, $k$-uniform hypergraph is written as $k$-graph for short and all hypergraphs mentioned are simple uniform hypergraphs, unless otherwise stated.

The elements of $V$ and $E$ are called vertices and edges (or hyperedges for $k\ge 3$) of $H$ respectively.
Denote $n=|V|$ and $m=|E|$. Label the vertices by natural numbers $1,\cdots, n$.

The adjacency tensor $\mathcal{A}=\mathcal{A}(H)$ of a $k$-graph $H$ refers to a multi-dimensional array with entries $\mathcal{A}_{i_1\cdots i_k}$ such that
$$\mathcal{A}_{i_1\cdots i_k}=
\begin{cases}
{1\over (k-1)!} &\text{if $\{i_1\cdots i_k\}$ is an edge of $H$,}  \\
\hspace{0.37cm} 0 &\text{otherwise,}
\end{cases}
$$
where each $i_j$ runs from $1$ to $n$ for $j\in [k]$.
The spectrum of $H$ is defined as the multi-set of eigenvalues of the tensor $\mathcal{A}(H)$. One may refer to the definition of tensor eigenvalues introduced by Qi~\cite{Qi05}. The spectral radius of $H$, denoted by $\rho(H)$, is the maximum modulus among all eigenvalues of $\mathcal{A}(H)$.

In spectral theory of hypergraphs, the spectral radius is an index that attracts much attention \cite{CD12, FTPL15,LSQ15,LM14,YSS16}. This may due to the fine properties of its corresponding eigenvector revealed in \cite{CPZ08,FGH13,YY10}, together with its popularity in graph counterpart (See \cite{BS86,CS57,HoSm75,Ni07,Row88} and references therein).

In 2012, Cooper and Dutle~\cite{CD12} systematically studied the eigen properties of the adjacency tensor of a $k$-graph and obtained hypergraph generalizations of many basic results of spectral graph theory.

In 2015, Li, Shao and Qi~\cite{LSQ15}  determined the unique $k$-graph with maximum spectral radius among all supertrees  by studying  perturbations of spectral radius under certain edge operations. The next year, Yuan, Shao and Shan~\cite{YSS16} proceeded to order the uniform supertrees with larger spectral radii by their newly introduced edge operation and a relation established by Zhou et al.~\cite{ZSWB14} between spectral radius of an ordinary graph and its $k$th power.

Recently, Fan, Tan, Peng and Liu~\cite{FTPL15} investigated the hypergraphs that attain largest spectral radii among all unicyclic and bicyclic $k$-graphs. They determined the linear hypergraph with maximum spectral radius over all linear unicyclic $k$-graphs and proposed several candidates for the bicyclic case. Later, Kang et al.~\cite{KLQY16} proved a conjecture in \cite{FTPL15} which lead to the hypergraph maximizing the spectral radius among all linear bicyclic $k$-graphs.

Motivating by the preceding work on maximizing and ordering spectral radius, we take  non-linear $k$-graphs into consideration and try to characterize the first few hypergraphs with larger spectral radii among all unicylic and bicyclic $k$-graphs.

The remainder of this paper is as follows. Section 2 presents relevant notations and some methods useful to later proofs, including the spectral radii perturbations under edge operations, spectra of power hypergraphs from graphs and the construction of weighted incidence matrices in comparing spectral radii. In Section 3, with the application of all these tools, the first five hypergraphs with larger spectral radii among all unicyclic $k$-graphs are determined. The final section further gives the first three hypergraphs that attain larger spectral radii over all bicyclic $k$-graphs. 

\section{Preliminaries}

Let $H=(V,E)$ be a $k$-graph with $n$ vertices and $m$ edges.
Let $E' \subset E$ and $V'=\cup_{e\in E'}e\subset V$. Then $H'=(V',E')$ is also a $k$-graph and is called a (partial) sub-hypergraph \cite{Ber76}, or simply a subgraph of $H$ (induced by $E'$).

Recall that a path in $H$ refers to an alternative sequence of distinct vertices and edges such that two consecutive vertices are contained in the edge between them in this sequence. If every two vertices in $H$ appear in at least one path, then $H$ is called a connected hypergraph.

%
%

A cycle in $H$ is formed from a path and another edge in $H$ containing the two end vertices of that path. The number of edges in this cycle is called its length.  
An edge containing in a cycle is called a cycle edge.

A $k$-graph on $n$ vertices and $m$ edges is called $r$-cyclic if $m(k-1)-n+l=r$, where $l$ is the number of its connected components~\cite{FTPL15}.  Note that $r\ge 0$, then for any simple $k$-graph we have $n\le m(k-1)+l$. Moreover, $r=0$ if and only if the uniform hypergraph is acyclic, i.e. it has no cycle \cite[Proposition 4, p.392]{Ber76}. A $1$-cyclic $k$-graph is also called a unicyclic $k$-graph and a bicyclic $k$-graph refers to a $2$-cyclic $k$-graph.

\begin{Lemma}\label{cyclic}
Let $H=(V,E)$ be a simple connected $r$-cyclic $k$-graph with $n$ vertices and $m$ edges.   Let $H_1=(V_1,E_1)$ be a connected subgraph of $H$. If $H_1$ is $r_1$-cyclic, then  $r_1 \le r$.
\end{Lemma}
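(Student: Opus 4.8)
The plan is to track the quantity $m'(k-1)-n'$ attached to subgraphs $H'=(V',E')$ of $H$ and to show that it does not decrease when edges are added. Passing from $H_1$ up to $H$ will then give $m_1(k-1)-n_1 \le m(k-1)-n$, and since both $H$ and $H_1$ are connected their component counts both equal $1$, so adding $1$ to each side turns this into exactly $r_1 \le r$.

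First I would isolate a single edge addition. Let $H'=(V',E')$ be an edge-induced subgraph with $E'\subsetneq E$, pick $e\in E\setminus E'$, and set $E''=E'\cup\{e\}$ and $V''=V'\cup e$. Then $m''=m'+1$ while $n''=n'+|e\setminus V'|$, so
$$\big(m''(k-1)-n''\big)-\big(m'(k-1)-n'\big)=(k-1)-|e\setminus V'|.$$
Because $|e|=k$, this increment is nonnegative exactly when $e$ meets $V'$, i.e. $|e\setminus V'|\le k-1$. So the whole argument reduces to adjoining the edges of $E\setminus E_1$ in an order in which each new edge shares at least one vertex with the part already built.

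The crux is guaranteeing that such an order exists, and here the connectedness of $H$ enters. I would prove the extension claim: if $E'$ is a nonempty proper subset of $E$ with covered vertex set $V'$, then some $f\in E\setminus E'$ satisfies $f\cap V'\neq\emptyset$. If not, every edge meeting $V'$ would lie in $E'$ and hence be contained in $V'$; then any path starting in $V'$ stays in $V'$, so the component of a vertex of $V'$ sits inside $V'$, while the vertices of an edge $f\in E\setminus E'$ all lie outside $V'$ and are thus unreachable from $V'$ --- contradicting the connectedness of $H$.

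With the claim in hand I would finish by induction on $|E\setminus E_1|$: starting from $H_1$ (which has a nonempty edge set) I repeatedly invoke the claim to select an edge of the remaining set that meets the current vertices, adjoin it, and apply the single-step inequality, which never decreases $m'(k-1)-n'$. After all edges of $E\setminus E_1$ are exhausted the subgraph is $H$, yielding $m_1(k-1)-n_1\le m(k-1)-n$ and therefore $r_1\le r$. The only real obstacle is the extension claim; the remaining steps are elementary counting through successive edge additions.
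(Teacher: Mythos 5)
Your proof is correct, but it takes a genuinely different route from the paper's. The paper argues in one shot: it sets $E_2=E\setminus E_1$, $V_2=\cup_{e\in E_2}e$, invokes the fact (quoted from Berge) that any simple $k$-graph with $l$ components satisfies $n_2\le m_2(k-1)+l$ (i.e.\ nonnegativity of the cyclomatic number), and then uses connectedness of $H$ only to note that each component of $H_2$ meets $V_1$, giving $n\le n_1+n_2-l$; combining these two inequalities yields $r_1\le r$ immediately. You instead re-derive everything from scratch by adding the edges of $E\setminus E_1$ one at a time in a connectivity-respecting order, observing that each such addition increases $m'(k-1)-n'$ by $(k-1)-|e\setminus V'|\ge 0$; the burden shifts to your extension claim (some unused edge always meets the current vertex set), which you correctly prove from connectedness of $H$ via the path definition. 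The trade-off: your argument is more elementary and self-contained, since it does not rely on the external nonnegativity fact (indeed, run from a single edge it reproves that fact), whereas the paper's proof is shorter modulo the citation and handles $E\setminus E_1$ as a single block without needing any ordering. Both arguments use connectedness of $H$ as the essential hypothesis, and your monotone quantity $m'(k-1)-n'$ is exactly the paper's counting inequality $n-n_1\le (m-m_1)(k-1)$ unrolled edge by edge.
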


\begin{proof}
 Let $E_2=E\backslash E_1$, $V_2=\cup_{e\in E_2}e$. Then $H_2=(V_2,E_2)$ is a $k$-uniform subgraph of $H$. Suppose that $|V_i|=n_i$ and $|E_i|=m_i$ for $i=1,2$.
 Since $H_1$ is connected and $r$-cyclic, we have
 $$n_1=m_1(k-1)-r_1+1.$$
Suppose that $H_2$ has $l$ connected components, then $n_2\le m_2(k-1)+l$. Moreover, since $H$ is connected, each component of $H_2$ intersects with $H_1$ at some vertices. Therefore, $n_1+n_2\ge n+l$. Then we have
$$n\le n_1+n_2-l \le m_1(k-1)-r_1+1+m_2(k-1)+l-l=m(k-1)-r_1+1.$$
Thus $r_1\le m(k-1)-n+1=r$.
\end{proof}

Denote by $\mathbb{U}^{m}$ and $\mathbb{B}^{m}$
the set of all connected uniform unicyclic and bicyclic hypergraphs with $m$ edges respectively,  where $m\ge 2$.

\begin{Proposition}\label{prop}
Let $H$ and $F$ be two $k$-graphs in $\mathbb{U}^{m}$ and $\mathbb{B}^{m}$ respectively. Then

$(i)$ ~~every two vertices in $H$ share at most two common edges;

$(ii)$ ~every three vertices in $H$ have at most one common edge;

$(iii)$ every two vertices in $F$ share at most three common edges;

$(iv)$ every three vertices in $F$ have at most two common edges;
\end{Proposition}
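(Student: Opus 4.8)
The plan is to handle all four statements at once through a single counting inequality, by examining the subgraph spanned by the edges that would witness a violation and then invoking Lemma~\ref{cyclic}. Suppose some $j$ prescribed vertices of a connected $r$-cyclic $k$-graph are jointly contained in $t$ common edges $e_1,\dots,e_t$, and let $C$ be the set of these $j$ common vertices. I would first pass to the subgraph $H_1=(V_1,E_1)$ induced by $E_1=\{e_1,\dots,e_t\}$, where $V_1=\cup_i e_i$. Since every $e_i$ contains the (nonempty) common set $C$, the edges share a vertex and hence $H_1$ is connected; its cyclicity is therefore $r_1=t(k-1)-n_1+1$ with $n_1=|V_1|$ and component count $l_1=1$.

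The heart of the argument is an upper bound on $n_1$. I would decompose $V_1$ as the disjoint union of $C$ and the private parts $e_i\setminus C$, each of size $k-j$, so that
$$n_1 \le j + \sum_{i=1}^{t}|e_i\setminus C| = j + t(k-j).$$
Substituting this into the expression for $r_1$ and simplifying gives
$$r_1 = t(k-1)-n_1+1 \ge t(k-1) - j - t(k-j) + 1 = (t-1)(j-1).$$
By Lemma~\ref{cyclic}, any connected subgraph satisfies $r_1\le r$, so $(t-1)(j-1)\le r$. All four claims then follow by specialising $(j,r)$: for the unicyclic case $r=1$, taking $j=2$ yields $t\le 2$ (statement $(i)$) and taking $j=3$ yields $t\le 1$ (statement $(ii)$); for the bicyclic case $r=2$, taking $j=2$ yields $t\le 3$ (statement $(iii)$) and taking $j=3$ yields $t\le 2$ (statement $(iv)$).

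I expect the main point requiring care to be the direction of the vertex estimate rather than any deep difficulty. One must bound $n_1$ from \emph{above} — equivalently, allow the private parts $e_i\setminus C$ to overlap as freely as possible — because it is the largest admissible $n_1$ that produces the \emph{smallest} value of $r_1$, and hence the safe lower bound $(t-1)(j-1)$ that must survive against the ceiling $r_1\le r$. (Forcing the private parts to be disjoint would give equality and exhibit the extremal configurations, but only the inequality is needed.) The remaining things to verify are routine: that $H_1$ is genuinely connected so that $l_1=1$ may be used in $r_1=t(k-1)-n_1+1$, and that $H_1$ qualifies as the connected subgraph $H_1$ in Lemma~\ref{cyclic} applied to $H$ (respectively $F$). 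Everything else is direct substitution, so I anticipate no serious obstacle.
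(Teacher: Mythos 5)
Your proposal is correct and is essentially the paper's own argument in unified form: both pass to the connected subgraph spanned by the common edges, bound its number of vertices, and invoke Lemma~\ref{cyclic}. The paper simply treats the four cases separately (exhibiting bicyclic, bicyclic, $3$-cyclic and $4$-cyclic subgraphs, exactly the values $2,2,3,4$ that your inequality $(t-1)(j-1)\le r$ reproduces), so there is no substantive difference and no gap.
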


\begin{proof}
 If there exist two vertices in $H$ having three common edges, or there are three vertices sharing two common edges, then the subgraph in $H$ induced by those common edges is bicyclic, which contradicts Lemma~\ref{cyclic}.

If $F$ has a pair of vertices sharing four common edges, then there is a $3$-cyclic subgraph in $F$ induced by the four edges, which contradicts Lemma~\ref{cyclic}. If there are three vertices in $F$ sharing three common edges, then the subgraph induced by the three edges is $4$-cyclic, which is a contradiction with Lemma~\ref{cyclic}.
\end{proof}

It is verified in \cite[Lemma~2.1]{FTPL15} that if $H$ contains exactly one cycle, then it is unicyclic ($1$-cyclic). Now  we prove the inverse.

\begin{Lemma}\label{1cyclic}
Let $H$ be a simple connected $k$-graph. Then $H$ is unicyclic $(1$-$cyclic)$ if and only if it has only one cycle.
\end{Lemma}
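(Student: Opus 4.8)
The forward implication is exactly \cite[Lemma~2.1]{FTPL15} as quoted, so my plan is to prove only the converse: a connected $1$-cyclic $k$-graph $H$ has exactly one cycle. Existence of a cycle is immediate, since $1$-cyclic means $r=1\neq 0$, and by the quoted fact that $r=0$ if and only if $H$ is acyclic, $H$ cannot be acyclic. I would then fix one cycle $C$ together with one of its cycle edges $e$, and let $x,y$ be the two vertices through which $e$ is joined to its two neighbours on $C$; these have degree at least $2$. The whole task is then to show that $C$ is the only cycle.

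The heart of the argument is to delete $e$. Let $H'$ be the subgraph on edge set $E\setminus\{e\}$ with $V'=\bigcup_{f\neq e}f$. First I would check that $H'$ is connected: the vertices lying only in $e$ simply drop out of $V'$, while $x$ and $y$ remain joined through the rest of $C$, so nothing else becomes disconnected. Writing $d$ for the number of vertices private to $e$, we have $m'=m-1$ and $n'=n-d$, and since $x,y$ are not private, $d\le k-2$. As $H$ is $1$-cyclic we have $n=m(k-1)$, so direct substitution gives $r(H')=(m-1)(k-1)-(n-d)+1=d-(k-2)\le 0$. On the other hand $H'$ is a connected simple $k$-graph, hence $r(H')\ge 0$. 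Therefore $r(H')=0$, so $H'$ is acyclic, and moreover $d=k-2$, i.e.\ $e$ meets $H'$ in precisely the two vertices $x,y$, its remaining $k-2$ vertices being private.

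To finish I would argue that every cycle of $H$ must coincide with $C$. Any cycle must use $e$, for otherwise it would lie inside the acyclic $H'$. A cycle through $e$ enters and leaves $e$ at two of its vertices of degree at least $2$ (the link vertices of a cycle have degree at least $2$), and since the only such vertices of $e$ are $x$ and $y$, every cycle traverses $e$ between $x$ and $y$; deleting $e$ from it leaves an $x$--$y$ path in $H'$. Thus a cycle of $H$ is exactly such an $x$--$y$ path together with $e$, and the claim reduces to uniqueness of the $x$--$y$ path in the hypertree $H'$. This last point is the step I expect to be the real obstacle: I would prove it by contradiction, taking two distinct $x$--$y$ paths in $H'$ and passing to the connected subgraph induced by their edges, then checking this subgraph is $r_1$-cyclic with $r_1\ge 1$ -- contradicting that $H'$ is $0$-cyclic, since Lemma~\ref{cyclic} forces every connected subgraph of an acyclic hypergraph to be acyclic. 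With path uniqueness established, $C$ is the unique cycle of $H$, completing the proof.
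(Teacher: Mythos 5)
Your converse argument has a genuine gap at its very first step: the claim that $H'$, obtained by \emph{deleting} the cycle edge $e$, is connected. This is false in general, because a vertex of $e$ other than $x,y$ may carry further edges and yet be attached to the rest of $H$ only through $e$. Concrete counterexample with $k=3$: take $e=\{x,y,z\}$, $f=\{x,y,w\}$, $g=\{z,a,b\}$. Here $n=6=m(k-1)$, so $H$ is a simple connected unicyclic $3$-graph with cycle $xeyfx$, and $e$ is a cycle edge; but deleting $e$ leaves the disjoint edges $f$ and $g$, so $H'$ is disconnected. This single failure propagates through everything you built on it: with $l$ components the correct count is $r(H')=(m-1)(k-1)-n'+l=d+l-(k-1)$, so without controlling $l$ you can conclude neither $r(H')=0$ nor $d=k-2$ (in the example $d=0\neq k-2$). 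Likewise the step ``the only vertices of $e$ of degree at least $2$ are $x$ and $y$'' is false there ($z$ has degree $2$), so the assertion that every cycle traverses $e$ between $x$ and $y$ loses its justification. The acyclicity of $H'$ \emph{can} be rescued by a component-counting argument in the spirit of Lemma~\ref{cyclic} (every component of $H'$ meets $e$, and $x,y$ lie in a common component via the rest of $C$, forcing $l\le k-1-d$ and hence $r(H')=0$), but you did not supply this, and even with it the ``cycles pass through $e$ only via $x,y$'' step still needs a separate repair, e.g.\ showing that any entry/exit pair of a cycle on $e$ must lie in one component of $H'$.

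It is worth noting how the paper sidesteps exactly this trap: instead of deleting the cycle edge $e_1$, it \emph{splits off} a vertex, replacing $e_1$ by $f=(e_1\setminus\{v_1\})\cup\{w\}$ for a brand-new vertex $w$. The resulting hypergraph keeps all $m$ edges, is easily seen to remain connected (any transit through $e_1$ can be rerouted via $f$ and the rest of the cycle), and has $n+1=m(k-1)+1$ vertices, hence is acyclic; therefore every cycle of $H$ contains $e_1$, and by the arbitrariness of the cycle edge all cycles share the same edge set, after which the cases $s=2$ (via Proposition~\ref{prop}) and $s\ge 3$ (via a vertex count on the subgraph induced by the cycle edges) yield uniqueness. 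Separately, your closing reduction to ``uniqueness of the $x$--$y$ path in a hypertree'' is only sketched: showing that two distinct $x$--$y$ paths induce a subgraph with $r_1\ge 1$ is not a pure count (the two paths may even use the same edge set, differing only in intermediate vertices) and would need its own case analysis. As written, the proposal does not constitute a correct proof.
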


\begin{proof}
It suffices to prove the necessity. Let $H=(V,E)$ with $|V|=n$, $|E|=m$.

Let $e_1$ be a cycle edge contained in cycle $C=v_1e_1v_2\cdots v_se_s v_1$. Let $w$ be a new vertex and $f=(e_1\backslash \{v_1\})\cup \{w\}$. Then  $H'=(V\cup \{w\}, (E\backslash \{e_1\})\cup \{f\})$ is a connected $k$-graph with $n+1$ vertices and $m$ edges.

Since $H$ is unicyclic, $n=m(k-1)$. Thus $n+1=m(k-1)+1$ which implies that $H'$ is acyclic. Hence all cycles in $H$ contain $e_1$.
According to the arbitrariness of $e_1$, it can be concluded that each cycle edge is contained in every cycle of $H$. In other words, all cycles in $H$ have the same edge set with the same length $s$.

If $s=2$, then by Proposition~\ref{prop} $(ii)$, the two cycle edges intersect at exactly two vertices. Thus $H$ has a unique cycle.

Suppose that $s\ge 3$. Denote by $F$ the subgraph induced by all cycle edges in $H$ on $n'$ vertices. Note that all edges of $F$ can be arranged in a cyclic sequence such that every two consecutive edges share at least one common vertices. If there exists two consecutive edges in $F$ intersecting at two vertices, then $$n'\le s+1+s(k-2)-2.$$
Thus $r'=s(k-1)-n'+1\ge 2$ which implies that $F$ is $r'$-cyclic subgraph with $r'\ge 2$, a contradiction with Lemma~\ref{cyclic}. Therefore, every two consecutive edges in $F$ intersects at only one vertex, which indicates that $H$ has a unique cycle.
%
%
%
%
%
\end{proof}



\subsection{Perturbations of spectral radii under edge operations}

%
%
%

In this subsection, we present two edge operations introduced in~\cite{LSQ15} and \cite{YSS16} that help investigating $k$-graphs with larger spectral radii.

Two vertices contained in one edge are called adjacent to each other and said to be connected by this edge. An edge $e$ that contains a vertex $v$ is called an incident edge of $v$. If a vertex has exactly one incident edge, then it is called a pendent vertex, otherwise it is called non-pendent. A pendent edge in a $k$-graph is an edge containing $k-1$ pendent vertices.

\begin{Definition}\cite{LSQ15}\label{em}
Let $r\ge 1$ and let $H=(V,E)$ be a $k$-graph with $u\in V$ and $e_1,\cdots, e_r\in E$ such that $u\notin \cup_{i=1}^re_i$. Suppose that $v_i\in e_i$ and write $e'_i=(e_i\backslash \{v_i\})\cup \{u\}$ for $i\in [r]$. Let $H'=(V,E')$ be the hypergraph with $E'=(E\backslash \{e_i:i\in [r]\})\cup \{e'_i:i\in [r]\}$. Then we say that $H'$ is obtained from $H$ by moving edges $(e_1,\cdots,e_r)$ from $(v_1,\cdots,v_r)$ to $u$.
\end{Definition}

\begin{Lemma}\cite{LSQ15}\label{e0}
Let $r\ge 1$ and let $H$ be a connected $k$-graph. Let $H'$ be the hypergraph obtained from $H$ by moving edges $(e_1,\cdots,e_r)$ from $(v_1,\cdots,v_r)$ to $u$. Assume that $H'$ contains no multiple edges. If ${\bf x}$ is a Perron vector of $H$ and $x_u\ge \max_{1\le i\le r}x_{v_i}$, then $\rho(H')>\rho (H)$.
\end{Lemma}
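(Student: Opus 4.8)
The plan is to exploit the variational (Rayleigh-quotient) characterization of the spectral radius of a connected $k$-graph. Since $\mathcal{A}(H)$ is a nonnegative weakly irreducible tensor, Perron--Frobenius theory for such tensors gives that $\rho(H)=\max\{\mathcal{A}(H){\bf x}^k:{\bf x}\ge 0,\ \sum_i x_i^k=1\}$, that this maximum is attained only at the strictly positive Perron vector ${\bf x}$, and that ${\bf x}$ then satisfies the eigenequation $\mathcal{A}(H){\bf x}^{k-1}=\rho(H){\bf x}^{[k-1]}$, where ${\bf x}^{[k-1]}$ is the vector with entries $x_i^{k-1}$. Writing the polynomial form explicitly, $\mathcal{A}(H){\bf x}^k=k\sum_{e\in E}\prod_{v\in e}x_v$ and $(\mathcal{A}(H){\bf x}^{k-1})_i=\sum_{e\ni i}\prod_{v\in e\setminus\{i\}}x_v$, so the whole problem reduces to comparing the weighted edge-product sums of $H$ and $H'$ evaluated against the \emph{same} vector ${\bf x}$.

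First I would feed the Perron vector ${\bf x}$ of $H$ into the characterization for $H'$, which immediately yields the lower bound $\rho(H')\ge\mathcal{A}(H'){\bf x}^k$. Because $H'$ differs from $H$ only in that each $e_i$ is replaced by $e_i'=(e_i\setminus\{v_i\})\cup\{u\}$, and because the no-multiple-edge hypothesis guarantees that these $m$ edges are genuinely distinct (so the polynomial form counts each exactly once), the two edge sums differ by
$$\mathcal{A}(H'){\bf x}^k-\mathcal{A}(H){\bf x}^k=k\sum_{i=1}^r\big(x_u-x_{v_i}\big)\prod_{v\in e_i\setminus\{v_i\}}x_v.$$
Since ${\bf x}>0$ and $x_u\ge x_{v_i}$ for every $i$, each summand is nonnegative, giving the chain $\rho(H')\ge\mathcal{A}(H'){\bf x}^k\ge\mathcal{A}(H){\bf x}^k=\rho(H)$.

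To upgrade this to strict inequality I would argue by contradiction: assume $\rho(H')=\rho(H)$. Then both inequalities in the chain are equalities, so ${\bf x}$ is itself a maximizer of the variational problem for $H'$; since $x_u>0$, the stationarity (Lagrange) condition forces ${\bf x}$ to satisfy $\mathcal{A}(H'){\bf x}^{k-1}=\rho(H'){\bf x}^{[k-1]}=\rho(H){\bf x}^{[k-1]}$. I would then read off the $u$-th coordinate of both eigenequations. In passing from $H$ to $H'$ the vertex $u$ (which, by hypothesis, lies in none of the $e_i$) acquires exactly the $r$ new incident edges $e_1',\dots,e_r'$ and loses none, so
$$(\mathcal{A}(H'){\bf x}^{k-1})_u-(\mathcal{A}(H){\bf x}^{k-1})_u=\sum_{i=1}^r\prod_{v\in e_i\setminus\{v_i\}}x_v>0,$$
whereas both left-hand coordinates must equal $\rho(H)x_u^{k-1}$ --- a contradiction. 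Hence $\rho(H')>\rho(H)$.

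The main obstacle, and the step deserving the most care, is this final strict-inequality argument: it rests on the Perron--Frobenius facts that the variational maximum is attained only at a strictly positive vector and that such a maximizer must satisfy the eigenequation coordinatewise. One must also confirm that $u$ genuinely gains incident edges without losing any (this uses $u\notin\cup_i e_i$) and that no replacement edge $e_i'$ silently coincides with an edge already present at $u$ (this is precisely the no-multiple-edge hypothesis); without these two points the coordinate comparison at $u$ could fail.
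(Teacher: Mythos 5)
Your proof is correct and takes essentially the same route as the source: this lemma is quoted from \cite{LSQ15} without proof in the present paper, and the original argument there is precisely yours --- bound $\rho(H')\ge\mathcal{A}(H')\mathbf{x}^{k}$ via the maximum characterization, compute the difference $k\sum_{i=1}^{r}(x_u-x_{v_i})\prod_{v\in e_i\setminus\{v_i\}}x_v\ge 0$ (where the no-multiple-edge hypothesis keeps the edge count honest), and kill the equality case by comparing the eigenequation at the coordinate $u$. The one refinement worth recording is that $H'$ need not be connected (moving edges can isolate a vertex $v_i$), so the characterization $\rho(H')=\max\{\mathcal{A}(H')\mathbf{x}^{k}:\mathbf{x}\ge 0,\ \sum_i x_i^{k}=1\}$ should be justified by symmetry and nonnegativity of the adjacency tensor rather than by weak irreducibility; your Lagrange-stationarity step is unaffected, since it uses only that the maximizer $\mathbf{x}$ is strictly positive, not that it is unique.
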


The following lemma follows directly from Lemma~\ref{e0}.

\begin{Lemma}\label{e}
Let $H$ be a connected $k$-graph and $v_1,\cdots, v_r$ be some of its vertices for $r\ge 2$. Let $H_i$ be a simple hypergraph obtained from $H$ by moving at least one edge from vertices $\{v_j: j\in [r]\backslash \{i\}\}$ to $v_i$. Then we have
$$\max\{\rho(H_i):i\in [r]\}>\rho(H).$$
\end{Lemma}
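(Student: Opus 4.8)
The plan is to derive the inequality from a single, well-chosen application of Lemma~\ref{e0}. First I would fix a Perron vector ${\bf x}$ of $H$, which exists and is positive by the Perron--Frobenius theory for connected hypergraphs referenced earlier. Among the designated vertices $v_1,\dots,v_r$ I would then select an index $i_0$ attaining the maximum entry, that is, $x_{v_{i_0}}=\max_{j\in[r]}x_{v_j}$.

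Next I would examine the hypergraph $H_{i_0}$, which by hypothesis is obtained from $H$ by moving at least one edge from the vertices in $\{v_j:j\in[r]\backslash\{i_0\}\}$ to $v_{i_0}$. This is exactly the operation described in Definition~\ref{em} with target vertex $u=v_{i_0}$: every edge being moved originates from a source vertex lying in $\{v_j:j\neq i_0\}$, and by the choice of $i_0$ each such source vertex $v_j$ satisfies $x_{v_j}\le x_{v_{i_0}}=x_u$. Hence the domination hypothesis $x_u\ge\max x_{v_j}$ of Lemma~\ref{e0} holds, the maximum being taken over the (possibly repeated) source vertices of the moved edges.

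Since $H_{i_0}$ is assumed to be a simple hypergraph, it contains no multiple edges, so Lemma~\ref{e0} applies directly and yields $\rho(H_{i_0})>\rho(H)$. The conclusion then follows at once, because
$$\max\{\rho(H_i):i\in[r]\}\ge\rho(H_{i_0})>\rho(H).$$

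The step requiring the most care is verifying that the general ``move at least one edge'' operation in the statement genuinely fits the framework of Lemma~\ref{e0}, which moves a tuple of edges $(e_1,\dots,e_r)$ from a tuple of source vertices $(v_1,\dots,v_r)$ to a common target $u$. One must observe that this framework permits repeated source vertices, so that several edges may be moved from the same $v_j$, and that the single domination condition $x_u\ge\max_i x_{v_i}$ is precisely what the maximality of $x_{v_{i_0}}$ supplies; since no edge is moved from $v_{i_0}$ itself, the target entry indeed dominates every source entry. Beyond this bookkeeping the argument is immediate.
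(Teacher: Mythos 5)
Your proposal is correct and is precisely the argument the paper intends: the paper gives no separate proof, stating only that the lemma ``follows directly from Lemma~\ref{e0}'', and your derivation---choosing $i_0$ to maximize the Perron entry among $v_1,\dots,v_r$, noting that every moved edge in $H_{i_0}$ has its source vertex in $\{v_j: j\ne i_0\}$ so the domination condition $x_{v_{i_0}}\ge \max_j x_{v_j}$ holds, and using simplicity of $H_{i_0}$ to rule out multiple edges---is exactly that direct application. Your bookkeeping remark that Definition~\ref{em} permits repeated source vertices and that no edge is moved from $v_{i_0}$ itself correctly addresses the only points where care is needed.
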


 From Lemma~\ref{e}, we have the corollary below for a special case.

\begin{Corollary}\label{pm}
Let $H$ be a connected $k$-graph having two adjacent vertices $u_1$ and $u_2$. Let $H'$ be the hypergraph obtained from $H$ by moving all incident edges of $u_2$ except all common edges shared by $u_1,u_2$  from $u_2$ to $u_1$. If $H'\ncong H$, then $$\rho(H) < \rho(H').$$

\end{Corollary}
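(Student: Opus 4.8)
The plan is to reduce the claim to Lemma~\ref{e} by exhibiting a hidden symmetry between the operation defining $H'$ and its mirror image. Write $C$ for the set of common edges containing both $u_1$ and $u_2$, write $B$ for the set of edges incident to $u_2$ but not $u_1$, and write $A$ for the set of edges incident to $u_1$ but not $u_2$. By Definition~\ref{em}, the hypergraph $H'$ is exactly the result of moving the edges of $B$ from $u_2$ to $u_1$ (each $e\in B$ being replaced by $(e\setminus\{u_2\})\cup\{u_1\}$, which is legitimate since $u_1\notin e$ for $e\in B$). First I would observe that the hypothesis $H'\ncong H$ forces $B\neq\varnothing$, since an empty move would return $H'=H$.

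Next I would introduce the mirror hypergraph $H''$ obtained from $H$ by moving all edges of $A$ from $u_1$ to $u_2$, and prove the key fact that $H'\cong H''$. The isomorphism is the transposition $\sigma$ that swaps $u_1$ and $u_2$ and fixes every other vertex. Tracking the three edge families: the common edges $C$ are fixed setwise by $\sigma$ and occur in both $H'$ and $H''$; an edge $\{u_1\}\cup a_0\in A$ (still incident to $u_1$ in $H'$, as only $B$ was moved) is sent by $\sigma$ to $\{u_2\}\cup a_0$, which is precisely the corresponding moved edge of $H''$; an edge $\{u_1\}\cup b_0$ of $H'$ arising from $e=\{u_2\}\cup b_0\in B$ is sent by $\sigma$ back to $e\in B\subseteq E(H'')$; and all edges meeting neither $u_1$ nor $u_2$ are fixed by $\sigma$ and untouched by both operations. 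Hence $\sigma$ carries $E(H')$ bijectively onto $E(H'')$, so $H'\cong H''$ and therefore $\rho(H')=\rho(H'')$. A useful byproduct is that if $A=\varnothing$ then this isomorphism degenerates to $H'\cong H$, contradicting the hypothesis; thus $A\neq\varnothing$ as well.

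With both $A\neq\varnothing$ and $B\neq\varnothing$ in hand, I would finish by applying Lemma~\ref{e} with $r=2$ and $(v_1,v_2)=(u_1,u_2)$. Taking $H_1=H'$ (at least one edge of $B$ moved from $u_2$ to $u_1$) and $H_2=H''$ (at least one edge of $A$ moved from $u_1$ to $u_2$), the lemma yields $\max\{\rho(H'),\rho(H'')\}>\rho(H)$. Since $\rho(H')=\rho(H'')$, the maximum equals $\rho(H')$, and we conclude $\rho(H)<\rho(H')$.

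I expect the main obstacle to be the verification that $\sigma$ is a genuine isomorphism $H'\to H''$, namely the careful bookkeeping of how each edge family ($A$, $B$, $C$, and the untouched edges) is transformed by the two moving operations and then by the vertex swap, together with checking the simplicity of $H'$ and $H''$ so that Lemma~\ref{e} applies. Once the symmetry $\rho(H')=\rho(H'')$ is established, invoking Lemma~\ref{e} is immediate and requires no Perron-vector case analysis, which is exactly the payoff of setting up the mirror construction rather than appealing to Lemma~\ref{e0} directly.
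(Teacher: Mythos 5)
Your proposal is correct and takes essentially the same route as the paper's own proof: the paper likewise introduces the mirror hypergraph $H''$ obtained by moving all non-common incident edges of $u_1$ from $u_1$ to $u_2$, notes that $H'\ncong H$ forces both vertices to have such edges, asserts $H''\cong H'$, and applies Lemma~\ref{e} to get $\rho(H)<\max\{\rho(H'),\rho(H'')\}=\rho(H')$. The only difference is one of detail: you explicitly verify the isomorphism $H'\cong H''$ via the transposition swapping $u_1$ and $u_2$ and track the edge families $A$, $B$, $C$, whereas the paper states these facts without proof.
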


\begin{proof}
If $u_1$ or $u_2$ does not have other incident edges except their common edges, then $H\cong H'$. Thus $H'\ncong H$ implies that $u_1,u_2$ each has incident edges other than the edges they share. Let $H''$ be the hypergraph  obtained from $H$ by moving all incident edges of $u_1$ except all common edges shared by $u_1,u_2$  from $u_1$ to $u_2$. Note that $H'$ and $H''$ do not have multiple edges since all common edges of $u_1,u_2$ remain unchanged. Moreover, $H''\cong H'$. By Lemma~\ref{e}, $\rho(H) <\max\{\rho(H'),\rho(H'')\}=\rho(H')$.
\end{proof}

\begin{Lemma}\cite{YSS16}\label{ne}
Let $k\ge 3$, $H$ be a connected $k$-graph on $n$ vertices having two edges $e$ and $f$ such that $|e\cap f|=k-r$ $(2\le r\le k-1)$. Let $V_1=e\cap f$ and $e\backslash V_1=\{u_1,\cdots,u_r\}$ and $f\backslash  V_1=\{v_1,\cdots,v_r\}$ where $r\ge 2$, $u_1,v_1$ are non-pendent vertices while $u_2,\cdots,u_r$ and $v_2,\cdots,v_r$ are pendent vertices.   Let $H_{e,f}$ be  the hypergraph obtained from $H$ by moving all the edges incident with $v_1$ except $f$ from $v_1$ to $u_2$. Then $\rho(H_{e,f})>\rho (H)$.
\end{Lemma}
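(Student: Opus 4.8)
The plan is to work through the Perron--Frobenius theory of the adjacency tensor. Since $H$ is connected, $\mathcal A(H)$ is weakly irreducible, so $\rho=\rho(H)$ has a positive eigenvector $\mathbf x$ (the Perron vector), and for every $\mathbf y\ge 0$ with $\|\mathbf y\|_k=1$ one has the Rayleigh bound $\rho(H_{e,f})\ge P_{H_{e,f}}(\mathbf y)$, where $P_G(\mathbf y)=k\sum_{g\in E(G)}\prod_{i\in g}y_i$; normalizing $\|\mathbf x\|_k=1$ gives $P_H(\mathbf x)=\rho$. Thus it suffices to produce one unit vector $\mathbf y$ with $P_{H_{e,f}}(\mathbf y)>\rho$. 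Before doing so I would record the identities forced on $\mathbf x$ by the pendent vertices: since $u_2,\dots,u_r$ are pendent in $e$ their eigenequations coincide, giving $x_{u_2}=\cdots=x_{u_r}$ and, after cancelling, $\prod_{i\in e\setminus\{u_2\}}x_i=\rho x_{u_2}^{k-1}$; symmetrically $x_{v_2}=\cdots=x_{v_r}$ and $\prod_{i\in f\setminus\{v_2\}}x_i=\rho x_{v_2}^{k-1}$. Writing $S=\sum_{g\ni v_1,\,g\ne f}\prod_{i\in g\setminus\{v_1\}}x_i$, which is $>0$ because $v_1$ is non-pendent, the eigenequation at $v_1$ reads $\rho x_{v_1}^{k-1}=\prod_{i\in f\setminus\{v_1\}}x_i+S$.

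The easy case is $x_{u_2}\ge x_{v_1}$. Here $H_{e,f}$ is exactly $H$ with the edges $\{g:\ v_1\in g,\ g\ne f\}$ moved from $v_1$ to $u_2$, and the Perron weight of the target $u_2$ dominates that of the single source $v_1$; so Lemma~\ref{e0} applies verbatim (the statement presumes $H_{e,f}$ is simple, and $H_{e,f}\ne H$ since $S>0$), giving $\rho(H_{e,f})>\rho(H)$.

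The difficulty lies in the complementary case $x_{v_1}>x_{u_2}$, where mass is moved from a heavier to a lighter vertex. The first thing I would try is the value--exchange vector $\mathbf y$ got from $\mathbf x$ by interchanging $x_{u_2}\leftrightarrow x_{v_1}$ and fixing all other entries; this preserves $\|\mathbf y\|_k=1$, and in the evaluation of $P_{H_{e,f}}(\mathbf y)$ the contributions of the relocated edges cancel against those in $H$, leaving $P_{H_{e,f}}(\mathbf y)-\rho=k\,(x_{v_1}-x_{u_2})(A-B)$ with $A=\prod_{i\in e\setminus\{u_2\}}x_i=\rho x_{u_2}^{k-1}$ and $B=\prod_{i\in f\setminus\{v_1\}}x_i$. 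The pendent identity yields $B=\rho x_{v_2}^{k}/x_{v_1}$, so $A-B>0\iff x_{v_1}x_{u_2}^{k-1}>x_{v_2}^{k}$. Dividing the $u_2$- and $v_2$-identities gives $(x_{u_2}/x_{v_2})^{k-r+1}=x_{u_1}/x_{v_1}$, and this closes the sub-case $x_{u_1}\ge x_{v_1}$: there $x_{u_2}\ge x_{v_2}$, whence $x_{v_1}x_{u_2}^{k-1}>x_{u_2}^{k}\ge x_{v_2}^{k}$, so $\mathbf y$ strictly beats $\rho$.

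The genuine obstacle is the remaining sub-case $x_{v_1}>x_{u_2}$ with $x_{u_1}<x_{v_1}$ (equivalently $x_{u_2}<x_{v_2}$): here $A-B$ can be negative, so the exchange vector may \emph{lower} the quotient and a purely local swap cannot succeed. In this regime the gain in $\rho$ is a global effect --- transplanting the whole branch hanging at $v_1$ onto the pendent vertex $u_2$ co-locates it, inside the single edge $e$, with the other heavy vertex $u_1$. To capture this I would enlarge the test family: transplant the branch to $u_2$, rescale its entries together with $y_{u_2}$ by a free parameter $t>0$, set $y_{v_1}=x_{u_2}$, keep the remaining entries equal to $\mathbf x$, and maximize $P_{H_{e,f}}(\mathbf y^{(t)})$ over $t$, substituting the eigenequations at $u_1,u_2,v_1,v_2$ to reduce the claim to a single-variable inequality. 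Making this maximization unconditional is the crux; a cleaner alternative, which also handles a branch that need not be a pendent tree, is to run the comparison through the weighted incidence matrix ($\alpha$-normal) machinery, copying the incidences of the relocated branch verbatim and checking that the resulting matrix is super-$\alpha$-normal for $H_{e,f}$. Either way, verifying the strict gain in this final configuration --- not the bookkeeping in the favorable cases --- is where the real work lies.
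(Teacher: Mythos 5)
Your groundwork is sound as far as it goes: the Rayleigh-quotient reduction, the pendent-vertex identities $x_{u_2}=\cdots=x_{u_r}$ and $\rho x_{u_2}^{k-r+1}=x_{u_1}\prod_{w\in V_1}x_w$ (hence $(x_{u_2}/x_{v_2})^{k-r+1}=x_{u_1}/x_{v_1}$), the disposal of the case $x_{u_2}\ge x_{v_1}$ via Lemma~\ref{e0}, and the swap computation $P_{H_{e,f}}(\mathbf y)-\rho(H)=k\,(x_{v_1}-x_{u_2})(A-B)$ with $A=\rho x_{u_2}^{k-1}$, $B=\rho x_{v_2}^{k}/x_{v_1}$ are all correct. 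But the proposal is not a proof: in the sub-case $x_{v_1}>x_{u_2}$ with $x_{u_1}<x_{v_1}$ you only gesture at strategies (a one-parameter transplanted test vector ``maximized over $t$'', or an $\alpha$-normal comparison) and explicitly concede the verification is not done. Neither sketch is carried out, and the first is genuinely delicate, so the lemma remains unproved in exactly the regime you identify as the crux. (Note also that the paper under review contains no proof of this statement at all --- Lemma~\ref{ne} is imported from \cite{YSS16}, so the comparison is against the source's Perron-vector argument, which your route matches on the easy cases.)

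The missing ingredient is not a cleverer test vector but a symmetry that makes your hard sub-case avoidable: the operation is self-dual, namely $H_{e,f}\cong H_{f,e}$, where $H_{f,e}$ is obtained by moving all edges incident with $u_1$ except $e$ from $u_1$ to $v_2$. The involution fixing every vertex outside $e\cup f$ and sending $u_1\leftrightarrow v_2$, $u_2\leftrightarrow v_1$, $u_j\leftrightarrow v_j$ for $j\ge 3$ maps the edge set of one to that of the other; this uses only that $u_2,\dots,u_r,v_2,\dots,v_r$ are pendent, and it survives even the degenerate situation where some edge $g\neq e,f$ contains both $u_1$ and $v_1$, since then both hypergraphs contain $(g\setminus\{u_1,v_1\})\cup\{u_1,u_2\}$ up to the relabeling. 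Consequently one may assume without loss of generality that $x_{u_1}\ge x_{v_1}$: if instead $x_{v_1}>x_{u_1}$, prove the claim for $H_{f,e}$, whose spectral radius equals that of $H_{e,f}$, by the mirror-image argument. Under this normalization your own identity $(x_{u_2}/x_{v_2})^{k-r+1}=x_{u_1}/x_{v_1}\ge 1$ forces $x_{u_2}\ge x_{v_2}$, and then in the remaining case $x_{v_1}>x_{u_2}$ your exchange vector already wins, since $x_{v_1}x_{u_2}^{k-1}>x_{u_2}^{k}\ge x_{v_2}^{k}$ gives $A-B>0$ strictly, whence $\rho(H_{e,f})\ge P_{H_{e,f}}(\mathbf y)>\rho(H)$. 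In short, your two favorable cases plus this one relabeling observation constitute a complete proof; the configuration $x_{v_1}>\max\{x_{u_1},x_{u_2}\}$, which your global-transplant and $\alpha$-normal proposals were meant to attack, never needs to be confronted.
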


\subsection{From graphs to power hypergraphs}

Let $G$ be a graph containing no loops, i.e. cycles of length $1$. The $k$th power of $G$ is defined as the $k$-graph $G^k$ obtained from $G$ by blowing up its edges to hyperedges through adding $k-2$ new pendent vertices to each edge of $G$.

If a hypergraph can be seen as a power of some graph without loops, then it is called a power hypergraph~\cite{HQS13}. 
Observe that a $k$-graph is a power hypergraph if and only if each of its edge contains at least $k-2$ pendent vertices. 

A simple hypergraph is called linear, if each pair of its edges intersects at no more than one vertex~\cite{Bre13}, otherwise it is called non-linear. The powers of a simple graph are always linear, while the $k$th power of a multi-graph is non-linear.

Recall that the adjacency matrix of a multi-graph~\cite{BM08} on $n$ vertices without loops is an $n\times n$ matrix whose $(ij)$-entry is the number of parallel edges connecting $i$ and $j$ if $i\ne j$ and zero otherwise.

Zhou et al.~\cite{ZSWB14} established the following relationship which enables us to acquire spectral information of a power hypergraph from the graph that generates it.

\begin{Lemma}\cite{ZSWB14}\label{p}
If $\lambda\ne 0$ is an eigenvalue of a graph $G$, then $\lambda^{2\over k}$ is an eigenvalue of $G^k$. Moreover, $\rho(G^k)=\rho(G)^{2\over k}$.
\end{Lemma}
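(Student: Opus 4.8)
The plan is to work directly from the tensor eigenvalue equation and to exhibit, for each eigenpair of $G$, an explicit eigenpair of $G^k$ built by a symmetric ansatz on the blown-up edges. Recall that $(\mu,\mathbf{x})$ is an eigenpair of the adjacency tensor $\mathcal{A}(G^k)$ precisely when, for every vertex $i$,
$$\mu\, x_i^{k-1}=\sum_{e\ni i}\ \prod_{j\in e\setminus\{i\}}x_j,$$
the sum running over the hyperedges $e$ of $G^k$ containing $i$. In $G^k$ every edge $\{u,v\}$ of $G$ becomes a hyperedge $\hat e=\{u,v\}\cup P_e$, where $P_e$ is a set of $k-2$ fresh pendent vertices; I will write the original vertices of $G$ as $u,v,\dots$ and the added ones as $w$.

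Given an eigenvector $\mathbf{y}$ of $G$ with eigenvalue $\lambda\neq 0$, I would set $\mu=\lambda^{2/k}$ and define $\mathbf{x}$ by $x_u=y_u^{2/k}$ on the original vertices and $x_w=(x_ux_v/\mu)^{1/2}$ for each $w\in P_e$ with $e=\{u,v\}$ (one branch of each fractional power being fixed once and for all). First I would verify the equation at a pendent vertex $w\in P_e$: since $\hat e$ is its only incident hyperedge and all vertices of $P_e$ carry the common value $x_w$, the requirement reduces to $\mu\,x_w^2=x_ux_v$, which holds by construction. Next I would check the equation at an original vertex $u$; substituting $x_w^{k-2}=(x_ux_v/\mu)^{(k-2)/2}$ into the sum over neighbours and collecting the powers of $x_u$ and $\mu$ turns the identity into
$$\mu^{k/2}\,x_u^{k/2}=\sum_{v\sim u}x_v^{k/2}.$$
Writing $y_u=x_u^{k/2}$, this is exactly the adjacency eigenequation of $G$ with eigenvalue $\mu^{k/2}$, so the $G^k$-equations hold iff $\mu^{k/2}=\lambda$, i.e. iff $\mu=\lambda^{2/k}$. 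This proves that $\lambda^{2/k}$ is an eigenvalue of $G^k$.

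For the equality $\rho(G^k)=\rho(G)^{2/k}$ I would specialise to the Perron data. Taking $G$ connected and $\mathbf{y}>\mathbf{0}$ the positive eigenvector belonging to $\lambda=\rho(G)>0$, every fractional power above is an unambiguous positive real, so the constructed $\mathbf{x}$ is strictly positive and is an eigenvector of the nonnegative tensor $\mathcal{A}(G^k)$ for the positive eigenvalue $\mu=\rho(G)^{2/k}$. Because $G^k$ is connected, $\mathcal{A}(G^k)$ is weakly irreducible, and the Perron--Frobenius theory for such tensors \cite{FGH13,YY10} guarantees that the spectral radius is the unique eigenvalue admitting a positive eigenvector; hence $\rho(G^k)=\rho(G)^{2/k}$.

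The routine part is the exponent bookkeeping in the vertex equation. The one point demanding care is the choice of branches of the fractional powers in the general complex case, where a branch must be fixed so that $x_u^{k/2}=y_u$ holds simultaneously at every vertex. I would sidestep this entirely for the spectral-radius statement by using the positive Perron data, where all roots are real and positive, and merely note that for an arbitrary nonzero eigenvalue the same algebraic identities persist once a single consistent branch is fixed.
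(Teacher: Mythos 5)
The paper does not actually prove this lemma: it is quoted from \cite{ZSWB14}, and the paper's only addition is the Remark that the original proof extends to multi-graphs without loops. So your proposal is in effect a reconstruction of the outsourced proof, and your construction --- fractional-power values on the original vertices, $x_w=(x_ux_v/\mu)^{1/2}$ on the blown-up vertices, verification of the tensor eigenequation, then Perron--Frobenius for the spectral-radius statement --- is essentially the argument of Zhou et al. The part of the lemma the paper actually uses, $\rho(G^k)=\rho(G)^{2/k}$, you prove correctly: with positive Perron data all roots are unambiguous real numbers, the exponent bookkeeping (the vertex equation reducing to $\mu^{k/2}x_u^{k/2}=\sum_{v\sim u}x_v^{k/2}$ since $k-1-\tfrac{k-2}{2}=\tfrac{k}{2}$) is right, and the appeal to weak irreducibility of the adjacency tensor of a connected hypergraph together with the fact that a positive eigenvector of a weakly irreducible nonnegative tensor must belong to the spectral radius \cite{FGH13,YY10} is legitimate; disconnected $G$ is handled componentwise.

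Two caveats. First, the paper applies the lemma exclusively to multi-graphs: $G(a,b)$ and $M(a,b)$ contain parallel edges (a cycle of length $2$, respectively a tripled edge), which is exactly what the paper's Remark is about. Your sums $\sum_{v\sim u}$ must therefore be read as sums over incident edges with multiplicity; since your construction is per-edge (each parallel edge receives its own pendent set), the argument goes through verbatim, but this deserves explicit mention. Second, for the first assertion (arbitrary complex $\lambda\neq 0$) your ``fix a single consistent branch once and for all'' is not yet a proof. The reduction at an original vertex uses identities of the type $(x_ux_v/\mu)^{(k-2)/2}=x_u^{(k-2)/2}x_v^{(k-2)/2}\mu^{-(k-2)/2}$ and $(y_u^{2/k})^{k/2}=y_u$, which fail for principal (or any independently fixed) branches off the positive reals; moreover for odd $k$ the value $x_w^{k-2}$ flips sign with the choice of square root, and that choice is shared by the equations at $u$ and at $v$, so the choices are globally coupled. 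The clean repair is to build everything multiplicatively from per-vertex roots: fix $L$ with $L^k=\lambda$ and, for each vertex, $\xi_u$ with $\xi_u^k=y_u$, then set $\mu=L^2$, $x_u=\xi_u^2$, and $x_w=\xi_u\xi_v/L$ on the pendent vertices of the edge $\{u,v\}$; every eigenequation is then verified using integer exponents only, for both parities of $k$ and for multigraphs. Since the paper never invokes the general-eigenvalue statement (only the $\rho$ identity), this gap does not affect anything downstream, but as a proof of the full lemma it needs that repair.
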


{\noindent \bf Remark.} In \cite[Theorem 16]{ZSWB14}, $G$ refers to a simple graph. However, it can be verified through the original proof that  Lemma~\ref{p} also works for multi-graphs without loops.

\vspace{0.3cm}

Denote by $\phi_G(x)=\det(xI-A(G))$ the characteristic polynomial of a graph $G$, where $A(G)$ is the adjacency matrix of $G$ and $I$ denotes the unit matrix.  If $G$ is obtained from two disjoint graphs $H$ and $K$ by amalgamating a vertex $u$ of $H$ and $v$ of $K$,
 then we have the following relation from \cite[Remark 1.6]{Row87}:
\begin{equation}
 \phi_G(x)=\phi_H(x)\phi_{K-v}(x)+\phi_{H-u}(x)\phi_K(x)-x\phi_{H-u}(x)\phi_{K-v}(x), \tag{$*$}
\end{equation}
where $H-u$ and $K-v$ denote the graphs obtained from $H$ and $K$ by deleting $u$ and $v$ and all their incident edges respectively.

Let $G(a,b)$ be a multi-graph obtained from a cycle of length 2 by attaching $a$ and $b$ pendent edges at its two vertices $u$ and $v$ respectively. Denote by $M(a,b)$ the multi-graph obtained from $G(a,b)$ by adding a new edge connecting $u$ and $v$ (See Figure~\ref{g}).

\input{graphs.TpX}

\begin{Lemma}\label{graph}
Let $G_1,G_2,G_3$ and $G(a,b)$ be the unicyclic graphs depicted in Figure~\ref{g} with $m$ edges.  Then for $m\ge 8$,
$$ \rho(G(m-2,0))>\rho(G_3)\ge\rho(G(m-4,2))>\max\{\rho(G_1),\rho(G_2)\}, $$
equality holds only if $m=8$.
\end{Lemma}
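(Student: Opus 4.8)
The plan is to reduce every graph to a small quotient by exploiting that the Perron eigenvector is constant on each class of pendant vertices hanging off a common vertex; this equitable partition collapses each of $G_1,G_2,G_3,G(a,b)$ to a matrix of fixed size whose Perron root is the spectral radius, and its characteristic polynomial is the ``reduced polynomial'' I will actually compare. (One could instead grind out the full characteristic polynomials via the amalgamation identity $(*)$, but the quotient route is cleaner for the Perron root.) Since all four graphs are ordinary multi-graphs, Lemma~\ref{p} plays no role in the proof itself; it only transfers the resulting ordering to the $k$th powers afterwards.

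First I would dispose of the family $G(a,b)$. Writing $\mu=\lambda^2$ and using that $G(a,b)$ is bipartite, the reduced eigen-equations at $u$, at $v$, and on the two pendant classes collapse to
\[ \mu^2-(a+b+4)\mu+ab=0, \]
so that $\rho(G(a,b))^2$ is the larger root. With $a+b=m-2$ the linear coefficient is frozen at $m+2$ and only the constant $ab$ moves: $ab=0$ for $G(m-2,0)$ and $ab=2m-8$ for $G(m-4,2)$. A computation of exactly the same type (with $G_3$ again bipartite) gives $\rho(G_3)^2$ as the larger root of $\mu^2-(m+2)\mu+m=0$. Now the larger root of $\mu^2-(m+2)\mu+C$ is strictly decreasing in $C$ (its discriminant $(m+2)^2-4C$ stays positive throughout), and for $m\ge 8$ one has $0<m\le 2m-8$ with equality precisely at $m=8$. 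Comparing the three constants $C\in\{0,\,m,\,2m-8\}$ therefore yields
\[ \rho(G(m-2,0))>\rho(G_3)\ge\rho(G(m-4,2)) \]
in one stroke, with the single equality occurring only when $m=8$.

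The last inequality, $\rho(G(m-4,2))>\max\{\rho(G_1),\rho(G_2)\}$, is where the work concentrates. The graphs $G_1,G_2$ carry a triangle, hence are not bipartite, and their spectral radii are the largest roots of genuine cubics $f_i(\lambda)$ in $\lambda$ rather than quadratics in $\mu$; for the triangle bearing $m-3$ pendants at one vertex, for instance, one finds $f(\lambda)=\lambda^3-\lambda^2-(m-1)\lambda+(m-3)$. To compare such a root with $\rho(G(m-4,2))=\sqrt{\mu^\ast}$, where $\mu^\ast=\tfrac12\bigl[(m+2)+\sqrt{m^2-4m+36}\bigr]$, I would evaluate $f_i$ at $\sqrt{\mu^\ast}$, reduce the powers via $(\sqrt{\mu^\ast})^2=\mu^\ast$ and $(\sqrt{\mu^\ast})^3=\mu^\ast\sqrt{\mu^\ast}$ together with the quadratic satisfied by $\mu^\ast$, and show $f_i(\sqrt{\mu^\ast})>0$. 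Because $\sqrt{\mu^\ast}\sim\sqrt m$ dwarfs the remaining roots of $f_i$ (which stay $O(1)$), positivity of $f_i$ at $\sqrt{\mu^\ast}$ places this point beyond the largest root of $f_i$ and gives $\rho(G_i)<\rho(G(m-4,2))$.

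The hard part will be controlling the sign of $f_i(\sqrt{\mu^\ast})$ \emph{uniformly} over all $m\ge 8$: the surd $\sqrt{m^2-4m+36}$ makes the inequality delicate near the lower boundary, and the clean asymptotics (one checks $f_i(\sqrt{\mu^\ast})\sim 2\sqrt m>0$) do not by themselves settle $m=8,9$. I would handle this by isolating the surd, squaring, and reducing the claim to a polynomial inequality in $m$ valid for all $m\ge 8$, verifying the few smallest values by direct estimation of both roots. Once the chained inequalities hold for the base graphs, Lemma~\ref{p} immediately converts them into the corresponding ordering of the $k$th-power hypergraphs, with the sole equality still attached to $m=8$.
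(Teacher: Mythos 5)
Your reduction of $G(a,b)$ and $G_3$ to the biquadratics $\mu^2-(m+2)\mu+C$ and the comparison of the constants $C\in\{0,m,2m-8\}$ is exactly right, and it is in substance what the paper does (the paper writes the larger roots as explicit radicals; monotonicity in $C$ is the same argument). The genuine error is your identification of $G_2$. In the paper $G_2$ contains no triangle: its characteristic polynomial, obtained from the amalgamation identity $(*)$, is $\phi_{G_2}(x)=x^{m-4}\bigl[x^4-(m+2)x^2+4(m-3)\bigr]$, which is even in $x$, so $G_2$ is a bipartite multigraph (a $2$-cycle on $u,v$ with a pendant edge $uw$ and $m-3$ pendant edges at $w$). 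Hence $G_2$ belongs to your ``one stroke'' comparison with constant $C=4(m-3)>2m-8$ for all $m>2$, which settles $\rho(G(m-4,2))>\rho(G_2)$ immediately; your plan to treat $G_2$ via the largest root of a ``genuine cubic'' analyzes the wrong graph, and nothing proved that way bears on the paper's $G_2$. Only $G_1$ (the triangle with $m-3$ pendant edges at one vertex, whose cubic factor $\lambda^3-\lambda^2-(m-1)\lambda+(m-3)$ you state correctly) needs a separate argument.

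For that one remaining comparison your direction is workable but needlessly hard, as you yourself note. Two caveats. First, to conclude from $f(\sqrt{\mu^\ast})>0$ that $\sqrt{\mu^\ast}$ exceeds the largest root of $f$, you must place $\sqrt{\mu^\ast}$ above the middle root; the correct reason is that the middle root lies in $(0,1)$ (since $f(0)=m-3>0$ and $f(1)=-3<0$), not that ``the remaining roots stay $O(1)$'' --- the smallest root is roughly $-\sqrt{m}$. Second, the paper evaluates in the opposite direction and thereby avoids the surd problem entirely: with $g(x)=x^4-(m+2)x^2+2(m-4)$ and $\rho=\rho(G_1)$, the relation $\rho^3=\rho^2+(m-1)\rho-m+3$ collapses $g(\rho)$ to $-2(\rho-\tfrac12)^2+m-\tfrac92$, which is negative for $m\ge 6$ because $\rho>\rho(K_{1,m-1})=\sqrt{m-1}$; since $g$ is biquadratic, $g(\rho)<0$ at once yields $\rho<\rho(G(m-4,2))$, uniformly in $m$ and with no separate checking of $m=8,9$. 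You should either adopt that direction or actually carry out the squaring and small-case verification you deferred, and in either case delete the cubic treatment of $G_2$.
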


\begin{proof}
Since $G_1$ can be obtained from a triangle $C_3$ and a star $K_{1,m-3}$ by amalgamating a vertex of $C_3$ and the unique non-pendent vertex of $K_{1,m-3}$,  by ($*$) we have
\begin{eqnarray*}
\phi_{G_1}(x)
&=& x^{m-3}\cdot \phi_{C_3}(x)+\phi_{P_2}(x)\cdot \phi_{K_{1,m-3}}(x)-x\cdot x^{m-3}\cdot \phi_{P_2}(x)\\
 & = & x^{m-4}(x+1)[x^3-x^2-(m-1)x+m-3],
\end{eqnarray*}
where $P_2$ is a path with one edge.
Similarly by using the amalgamating operation, we obtain the following characteristic polynomials.
\begin{eqnarray*}
\phi_{G(a,b)}(x)
&=& x^{m-4}[x^4-(m+2)x^2+ab],\\
\phi_{G_2}(x)& = & x^{m-4}[x^4-(m+2)x^2+4(m-3)] ,\\
\phi_{G_3}(x)& = & x^{m-4}[x^4-(m+2)x^2+m].
\end{eqnarray*}
Thus
$$\rho(G(m-2,0))^2= m+2,~\rho(G(m-4,2))^2= {1\over2}\left(m+2+\sqrt{m^2-4m+36}\right),$$
$$
\rho(G_2)^2= {1\over2}\left(m+2+\sqrt{m^2-12m+52}\right), ~
    \rho(G_3)^2= {1\over2}\left(m+2+\sqrt{m^2+4}\right).$$


It is clear that when $m\ge 8$,
$$\rho(G(m-2,0))^2>\rho(G_3)^2 \ge \rho(G(m-4,2))^2>\rho(G_2)^2,$$
 equality holds only if $m=8$. This relationship also holds for the corresponding spectral radii.


Now it remains to compare $\rho(G(m-4,2))$ and $\rho(G_1)$.

Let $\rho=\rho(G_1)$. Then from $\phi_{G_1}(x)$ we have $\rho^3=\rho^2+(m-1)\rho-m+3$.  Let $g(x)=x^4-(m+2)x^2+2(m-4)$. Then
\begin{eqnarray*}
g(\rho)&=& \rho^4-(m+2)\rho^2+2(m-4)   \\
& = & \rho[\rho^2+(m-1)\rho-m+3]-(m+2)\rho^2+2(m-4)\\
& = & \rho^3-3\rho^2-(m-3)\rho+2(m-4)\\
& = & \rho^2+(m-1)\rho-m+3-3\rho^2-(m-3)\rho+2(m-4)\\
& = & -2\left(\rho-{1\over 2}\right)^2+m-{9\over 2}.
\end{eqnarray*}
Since $\rho> \rho(K_{1,m-1})=\sqrt{m-1}$, we have for $m\ge 6$ that
$$g(\rho)<m-{9\over 2}-2\left(\sqrt{m-1}-{1\over 2}\right)^2<2\sqrt{m-1}-m<0.$$
According to $\phi_{G(a,b)}(x)$, $\rho(G(m-4,2))$ is the largest zero point of $g(x)$, thus it is strictly larger than $\rho=\rho(G_1)$. The proof is completed.
\end{proof}


\subsection{Weighted incidence matrix in comparing spectral radius }

In \cite{LM14}, Lu and Man introduced the weighted incidence matrix for hypergraphs. They discovered a way to characterize the spectral radius in terms of a particular value $\alpha$ by constructing consistent $\alpha$-normal, $\alpha$-subnormal or $\alpha$-supernormal weighted incidence matrix for the target hypergraph.

\begin{Definition}\cite{LM14}
A weighted incidence matrix $B$ of a hypergraph $H=(V,E)$ is a $|V|\times |E|$ matrix such that for any vertex $v$ and any edge $e$, the entry $B(v,e)>0$ if $v\in e$ and $B(v,e)=0$ if $v \notin e$.
\end{Definition}

\begin{Definition}\cite{LM14}\label{alpha}
A hypergraph $H$ is called $\alpha$-$subnormal$ if there exists a weighted incidence matrix $B$ satisfying

$(a)$ $\sum_{e:v\in e}B(v,e)\le 1$, for any $v\in V(H)$;

$(b)$ $\prod_{v\in e}B(v,e)\ge \alpha$, for any $e\in E(H)$.\\
If no strict inequality appears in $(a)$ and $(b)$, then $H$ is $\alpha$-normal. Otherwise, $H$ is called strictly $\alpha$-subnormal. If furthermore,
$$\prod_{i=1}^l{B(v_i,e_i)\over B(v_{i-1},e_i)}=1$$ for any cycle $v_0e_1v_1e_2\cdots e_lv_0$ $(l\ge 1)$ in $H$, then $B$ is consistent and $H$ is called strictly and consistently $\alpha$-$subnormal$.
\end{Definition}

\begin{Lemma}\cite{LM14}\label{a}
Let $H$ be a $k$-graph. Then

$(i)$ ~$\rho(H)=\alpha^{-{1\over k}}$ if and only if $H$ is consistently $\alpha$-normal;

$(ii)$~if $H$ is strictly and consistently $\alpha$-subnormal, then $$\rho(H)<\alpha^{-{1\over k}}.$$
\end{Lemma}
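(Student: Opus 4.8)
The plan is to build an explicit dictionary between positive eigenvectors of the adjacency tensor $\mathcal{A}(H)$ and consistent weighted incidence matrices, and then read off both parts from the Perron--Frobenius theory for nonnegative weakly irreducible tensors (the tensor analogue of the min--max / Collatz--Wielandt characterization of $\rho$). Writing the eigenvalue equation $\mathcal{A}\mathbf{x}^{k-1}=\lambda\mathbf{x}^{[k-1]}$ out edgewise, at each vertex $v$ it reads
$$\sum_{e:\,v\in e}\ \prod_{u\in e\setminus\{v\}}x_u=\lambda\,x_v^{k-1}.$$
The guiding observation is that for $\mathbf{x}>0$ one may set $B(v,e)=\bigl(\prod_{u\in e\setminus\{v\}}x_u\bigr)/(\lambda\,x_v^{k-1})$, and a direct computation then shows three facts: condition $(a)$ with equality is exactly the eigenvalue equation; $\prod_{v\in e}B(v,e)=\lambda^{-k}$ (each $x_u$ appears $k-1$ times in the numerator); and the consistency product around any cycle telescopes to $x_{v_0}^k/x_{v_0}^k=1$. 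These identities are the entire computational content of the lemma.

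For the forward direction of $(i)$, assume $\rho(H)=\alpha^{-1/k}$. For connected $H$, where $\mathcal{A}(H)$ is weakly irreducible, Perron--Frobenius supplies a positive eigenvector $\mathbf{x}$ at $\lambda=\rho$; defining $B$ by the formula above and taking $\alpha=\rho^{-k}$ makes $B$ consistently $\alpha$-normal by the three identities just noted. The substance of the lemma is the reverse construction, and this is the step I expect to be the main obstacle: recovering $\mathbf{x}$ from a given consistent $B$. Inverting the defining formula yields $\bigl(x_v/x_u\bigr)^k=B(u,e)/B(v,e)$ for any two vertices $u,v$ of a common edge $e$, so the ratio $x_v/x_u$ is forced along every edge. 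Fixing a base vertex and propagating these $k$-th roots along paths defines $\mathbf{x}$, and the definition is well posed precisely when the product of ratios around each cycle is $1$; since $\bigl(\prod_i x_{v_i}/x_{v_{i-1}}\bigr)^k=\prod_i B(v_{i-1},e_i)/B(v_i,e_i)$, this holds exactly by the consistency condition of Definition~\ref{alpha}. Thus consistency is the cocycle condition that makes the potential $\mathbf{x}$ globally well defined, which is the crux of the whole argument.

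Once $\mathbf{x}$ exists, one checks that $B(v,e)=C_e\prod_{u\in e\setminus\{v\}}x_u/(\lambda x_v^{k-1})$ for a single edge-constant $C_e$ independent of the choice of $v\in e$, and condition $(b)$ forces $C_e^{\,k}=\alpha\lambda^{k}$. Choosing $\lambda=\alpha^{-1/k}$ gives $C_e=1$ in the normal case, so condition $(a)$ becomes the eigenvalue equation for the positive vector $\mathbf{x}$ at eigenvalue $\alpha^{-1/k}$; since a nonnegative weakly irreducible tensor has $\rho$ as its unique eigenvalue admitting a positive eigenvector, part $(i)$ follows.

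For part $(ii)$, the same choice $\lambda=\alpha^{-1/k}$ turns the subnormal inequality $(b)$ into $C_e\ge 1$, and then $(a)$ yields $(\mathcal{A}\mathbf{x}^{k-1})_v\le\alpha^{-1/k}x_v^{k-1}$ for every $v$, with strict inequality at some vertex $v_0$ (traced either through a strict $(a)$-inequality, or, via some $C_{e_0}>1$, through a strict $(b)$-inequality at a vertex of $e_0$). The min--max characterization $\rho=\min_{\mathbf{x}>0}\max_v (\mathcal{A}\mathbf{x}^{k-1})_v/x_v^{k-1}$ immediately gives $\rho(H)\le\alpha^{-1/k}$. To upgrade this to a strict inequality I would argue that equality would force $\mathbf{x}$ to attain the minimum, hence to be an eigenvector for $\rho$, so that $(\mathcal{A}\mathbf{x}^{k-1})_v=\rho\,x_v^{k-1}$ at every vertex, contradicting strictness at $v_0$; therefore $\rho(H)<\alpha^{-1/k}$. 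The only delicate point beyond the consistency reconstruction is invoking weak irreducibility correctly, which is harmless here since the hypergraphs of interest are connected.
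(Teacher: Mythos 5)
Your proposal is correct, but note that the paper itself contains no proof of this lemma: it is imported verbatim from \cite{LM14}, with only a remark adjusting the normalization by the factor $(k-1)!$, and what you have written is essentially a reconstruction of Lu and Man's original argument --- the dictionary $B(v,e)=\bigl(\prod_{u\in e\setminus\{v\}}x_u\bigr)/(\lambda x_v^{k-1})$ between positive eigenvectors and weighted incidence matrices, with consistency recognized as exactly the cocycle condition that makes the potential $\mathbf{x}$ globally well defined (your well-posedness step implicitly uses that the multiplicative holonomy of a closed walk factors through the simple cycles of Definition~\ref{alpha}, which is standard since the holonomy takes values in the abelian group of positive reals). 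The two Perron--Frobenius inputs you invoke without proof --- weak irreducibility of the adjacency tensor of a connected $k$-graph with existence and uniqueness of the positive Perron vector, and the strict Collatz--Wielandt comparison that $\mathcal{A}\mathbf{x}^{k-1}\le\lambda\mathbf{x}^{[k-1]}$ with $\mathbf{x}>0$ and strict inequality somewhere forces $\rho<\lambda$ --- are exactly the facts supplied by \cite{CPZ08}, \cite{FGH13} and \cite{YY10}, so your argument closes along the same lines as the cited source.
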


{\noindent \bf Remark.} In the paper \cite{LM14} of Lu and Man, the spectral radius, say $\rho^*(H)$, is multiplied by a constant factor $(k-1)!$, i.e. $\rho^*(H)=(k-1)!\rho(H)$. Hence we adjust the original formula $\rho^*(H)<(k-1)!\alpha^{-{1\over k}}$ to the above one.

\vspace{0.3cm}

\input{u3.TpX}
 Denote by $U_2(a,b)$ the $k$th power of $G(a,b)$. Let $U_3^1(a,b;c)$ be the $k$-graph obtained from $U_2(a,b)$ by attaching $c$ pendent edges at an arbitrary pendent vertex $w$ in a cycle edge.  Let $U_3^2(a,b;c)$ be the $k$-graph obtained from $U_2(a+1,b)$ by attaching $c$ pendent edges at a pendent vertex $w$ adjacent to $u$ outside the cycle.

The $k$-graphs $U_3^1(a,b;c)$ and $U_3^2(a,b;c)$ are presented in Figure~\ref{ugraph}, where each edge is represented by a closed curve and all non-pendent vertices are in different color.

By letting $\alpha$ be an expression of a certain spectral radius and constructing specific weighted incidence matrices, the following lemma establishes a relation of spectral radii between different hypergraphs.


\begin{Lemma}\label{hgraph}
Let $m\ge 8$. Then for $a\le 1$,
$$\rho(U_3^1(a,0;m-2-a))<\rho(U_2(m-4,2)).$$

\end{Lemma}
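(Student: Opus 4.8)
The plan is to apply the weighted incidence matrix machinery of Lemma~\ref{a}. Since $U_2(m-4,2)$ is the $k$th power of the multi-graph $G(m-4,2)$, Lemma~\ref{p} gives $\rho(U_2(m-4,2)) = \rho(G(m-4,2))^{2/k}$, and from Lemma~\ref{graph} we have the explicit value $\rho(G(m-4,2))^2 = \frac{1}{2}(m+2+\sqrt{m^2-4m+36})$. Setting $\beta = \rho(U_2(m-4,2))$, the target hypergraph $U_2(m-4,2)$ is consistently $\alpha$-normal with $\alpha = \beta^{-k}$ by part $(i)$ of Lemma~\ref{a}. My strategy is to prove that $U_3^1(a,0;m-2-a)$ is strictly and consistently $\alpha$-subnormal for this same $\alpha$; then part $(ii)$ of Lemma~\ref{a} yields $\rho(U_3^1(a,0;m-2-a)) < \alpha^{-1/k} = \beta = \rho(U_2(m-4,2))$, which is exactly the claim.

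The construction proceeds in steps. First I would record a consistent $\alpha$-normal weighted incidence matrix $B_0$ for $U_2(m-4,2)$ explicitly: on each pendent vertex the single incidence weight is forced, and on the cycle edges the weights at the two non-pendent hub vertices $u,v$ are determined by the normality conditions $(a)$ and $(b)$ together with the spectral value $\alpha$. Because $U_2(m-4,2)$ is a power hypergraph with a very simple cycle-plus-star structure, these weights can be written in closed form in terms of $\beta$. Second, I would transport this weighting to $U_3^1(a,0;m-2-a)$. The two hypergraphs differ only near one pendent vertex $w$ of a cycle edge: in $U_3^1$ we have attached $c = m-2-a$ extra pendent edges at $w$, and we have taken $a \le 1$ pendent edges at the hub rather than the $m-4$ present in $U_2(m-4,2)$. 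So on the common part I keep the weights essentially unchanged and assign the forced weights on the new pendent edges, then verify conditions $(a)$ and $(b)$ everywhere and check that at least one inequality is strict.

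The key computation is the local balance at the three vertices whose neighborhoods change: the hub $u$ (which loses pendent edges), the pendent vertex $w$ (which gains $c$ pendent edges and must now satisfy $\sum_{e : w \in e} B(w,e) \le 1$), and the new pendent vertices. The condition at $w$ becomes a single scalar inequality comparing the original contribution along the cycle edge plus the $c$ new pendent contributions against $1$, and the condition at $u$ frees up slack because pendent edges have been removed. The heart of the argument is showing that the slack created at $u$ by removing many pendent edges, combined with the hypothesis $a \le 1$ and $m \ge 8$, is enough to absorb the load of the $c = m-2-a$ new pendent edges at $w$ while keeping condition $(a)$ satisfied and every product $(b)$ at least $\alpha$, with strictness somewhere. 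I expect the main obstacle to be precisely this: verifying that the net inequality at $w$ holds for all $a \le 1$ simultaneously, since larger $c$ (equivalently smaller $a$) stresses the bound at $w$ most severely. The role of the constraints $a \le 1$ and $m \ge 8$ is to guarantee that this worst case still clears, and confirming the resulting polynomial inequality in $\beta$ (or equivalently in $m$, after substituting the closed form for $\beta^k$) is the one genuinely technical estimate; consistency of $B$ is automatic here because the only cycle is the single length-$2$ cycle through $u$ and $v$, whose defining product can be arranged to equal $1$ by the symmetric choice of cycle-edge weights.
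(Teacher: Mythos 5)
Your high-level strategy coincides exactly with the paper's: take $\alpha=\rho(G(m-4,2))^{-2}$ so that $\alpha^{-1/k}=\rho(U_2(m-4,2))$ by Lemma~\ref{p}, show $U_3^1(a,0;m-2-a)$ is strictly and consistently $\alpha$-subnormal, and invoke Lemma~\ref{a}~$(ii)$. But as written there are two gaps, one of which is a step that would fail if executed literally. The ``transport'' plan --- keep the weights of the consistent $\alpha$-normal matrix of $U_2(m-4,2)$ ``essentially unchanged'' on the common part --- cannot work. In that normal matrix, consistency on the $2$-cycle ($x_1y_2=x_2y_1$) together with the normality requirement that \emph{both} cycle-edge products equal $\alpha$ forces the symmetric solution $x_1=x_2$, $y_1=y_2$. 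After attaching $c=m-2-a$ pendent edges at $w\in e_2$, condition $(a)$ at $w$ forces $B(w,e_2)\le 1-c\alpha<1$, so with the hub weights essentially fixed the product on $e_2$ becomes $\alpha(1-c\alpha)<\alpha$, violating $(b)$: the matrix is then supernormal on $e_2$, which is the wrong direction and yields no upper bound on the spectral radius. The slack freed at $u$ and $v$ must instead be channeled into a \emph{large} rebalancing of the cycle weights, as the paper does: impose $x_1y_1=\alpha$ on $e_1$ alone, set $A=x_2/x_1=y_2/y_1$, and solve $(1+A)^2\alpha=1-a\alpha$, giving $A=\sqrt{\alpha^{-1}-a}-1$, which is of order $\sqrt{m}$ --- nothing like a small perturbation of the symmetric weighting.

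Second, the decisive inequality --- that the product on $e_2$, namely $[1-(m-2-a)\alpha]A^2\alpha$, exceeds $\alpha$ --- is exactly the estimate you defer as ``the one genuinely technical estimate,'' and it is the entire content of the lemma; stopping before it leaves a plan, not a proof. The paper settles it by bounding the two factors separately, using $a\ge 0$ to get $1-(m-2-a)\alpha\ge 1-(m-2)\alpha$ and $a\le 1$ to get $A\ge\sqrt{\alpha^{-1}-1}-1$, then using $\alpha^{-1}>m$ to obtain $\bigl[\alpha^{-1}-(m-2)\bigr]\bigl(\sqrt{1-\alpha}-\sqrt{\alpha}\bigr)^2>2\bigl(\sqrt{1-1/m}-\sqrt{1/m}\bigr)^2>1$ for $m\ge 15$, with a direct computation covering $8\le m\le 14$. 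Note also that your worst-case heuristic (``smaller $a$ stresses the bound at $w$ most severely'') is not obviously valid: decreasing $a$ shrinks the factor $1-(m-2-a)\alpha$ but enlarges $A$, so the dependence on $a$ is not monotone on its face; bounding the two factors separately, as above, is how the paper sidesteps this, and your argument would need to do the same or justify the monotonicity claim.
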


\begin{proof}
Let $\alpha=\rho(G(m-4,2))^{-2}$. Since $U_2(m-4,2)$ is the $k$th power of $G(m-4,2)$, by Lemma~\ref{p} we have $\alpha^{-{1\over k}}=\rho(G(m-4,2))^{2\over k}= \rho(U_2(m-4,2))$.

When $m\ge 8$ and $a\le 1$, we claim that $U_3^1(a,0;m-2-a)$ is strictly and consistently $\alpha$-subnormal.

Now we construct a weighted incidence matrix $B$ for $U_3^1(a,0;m-2-a)$. For each pendent vertex $p$ in edge $e$, let $B(p,e)=1$. For non-pendent vertex $q$ in a pendent edge $f$, let $B(q,f)=\alpha$.

 Suppose that $e_1$ and $e_2$ are the two non-pendent edges of $U_3^1(a,0;m-2-a)$ and $w\in e_2$.
Write $x_i=B(u,e_i)$, $y_i=B(v,e_i)$ for $i=1,2$ and $z=B(w,e_2)$. Let
$$x_1+x_2=1-a\alpha,~y_1+y_2=1,~z=1-(m-2-a)\alpha,~x_1y_2=x_2y_1,~x_1y_1=\alpha. $$
Since we have $x_1y_2=x_2y_1$ for the unique cycle $ue_1ve_2u$,  $B$ is consistent according to Definition~\ref{alpha}. It is easy to verify that all equalities hold for ($a$) and ($b$) of Definition~\ref{alpha} except on the edge $e_2$.

Now we compare $x_2y_2z$ with $\alpha$. Let $A={x_2\over x_1}={y_2\over y_1}>0$. Then $$1-a\alpha=(x_1+x_2)(y_1+y_2)=(1+A)^2x_1y_1=(1+A)^2\alpha.$$
Thus $A=\sqrt{{1\over \alpha}-a}-1\ge \sqrt{{1\over \alpha}-1}-1$.
Since $${1\over \alpha}=\rho(G(m-4,2))^2= {1\over2}\left(m+2+\sqrt{m^2-4m+36}\right)> m,$$
when $m\ge 15$ we have that
\begin{eqnarray*}
{x_2y_2z\over \alpha}&=&[1-(m-2-a)\alpha]A^2 \\
& \ge & \left[{1\over\alpha}-(m-2)\right]\left(\sqrt{1-\alpha}-\sqrt{\alpha}\right)^2\\
& > &  2\left(\sqrt{1-{1\over m}}-\sqrt{1\over m}\right)^2\\
& \ge & 2\left(\sqrt{14\over 15}-\sqrt{1\over 15}\right)^2 > 1 .
\end{eqnarray*}

Direct computation shows that the value of $\left[{1\over\alpha}-(m-2)\right](\sqrt{1-\alpha}-\sqrt{\alpha})^2$ rests in the interval $(1.1, 1.4)$ when $8\le m\le 14$. Hence $\prod_{t\in e_2} B(t,e_2)=x_2y_2z>\alpha$ for $m\ge 8$. Thus $U_3^1(a,0;m-2-a)$ is strictly $\alpha$-subnormal by Definition~\ref{alpha}.
By Lemma~\ref{a} $(ii)$, we have
$$\rho(U_3^1(a,0;m-2-a))<\alpha^{-{1\over k}}= \rho(U_2(m-4,2)).$$
\end{proof}

\input{b3.TpX}

Let $B_2(a,b)$ be the $k$th power of $M(a,b)$  depicted in Figure~\ref{g}.
Denote by $B_3^1(a,b,c)$ the $k$-graph with merely two non-pendent edges which intersect at exactly three vertices $u,v,w$, where $a,b,c$ are the number of pendent edges attached at $u,v,w$ respectively. Let $B_3^2(a,b,c)$ ($B_3^3(a,b,c)$, resp.) be the hypergraph obtained from $U_3^1(a,b;c)$ by adding a new edge containing $u,v$ ($v,w$ resp.) and $k-2$ new pendent vertices. Let $B_3^4(a,b,c)$ ($B_3^5(a,b,c)$ and $B_3^6(a,b,c)$ resp.) be the hypergraph obtained from $U_3^2(a,b;c)$ by adding a new edge containing $u,v$ ($u,w$ and $v,w$ resp.) and $k-2$ new pendent vertices. Let $B_4$ be the bicyclic hypergraph obtained from $B_3^1(0,0,0)$ by attaching $m-2$ pendent edges at an arbitrary pendent vertex $t$ in a cycle edge.

\begin{Lemma}\label{bgraph}
Let $m\ge 5$. Then
$$(i)~~\rho(B_3^1(m-2,0,0))=\rho(B_2(m-3,0));\hspace{5.8cm}$$
$$(ii)~ \max\{\rho(B_3^1(m-3,1,0)),\rho(B_3^3(0 ,m-3,0)),\rho(B_4)\}<\rho(B_2(m-4,1)).$$
\end{Lemma}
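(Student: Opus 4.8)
The whole lemma is a comparison against the two power hypergraphs $B_2(m-3,0)$ and $B_2(m-4,1)$, so the plan is to first pin down their spectral radii and then push everything through the weighted incidence matrix machinery of Lemma~\ref{a}. Since $B_2(a,b)$ is the $k$th power of the multigraph $M(a,b)$, Lemma~\ref{p} gives $\rho(B_2(a,b))=\rho(M(a,b))^{2/k}$, and a direct eigenvector computation on $M(a,b)$ (whose adjacency matrix carries the single off-diagonal entry $3$ for the triple edge $uv$) yields $\rho(M(m-3,0))^2=m+6$ and $\rho(M(m-4,1))^2=y$, where $y=\tfrac12\big(m+6+\sqrt{m^2+8m+52}\big)$ is the larger root of $y^2-(m+6)y+(m-4)=0$. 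Hence the two target values are $\rho(B_2(m-3,0))=(m+6)^{1/k}$ and $\rho(B_2(m-4,1))=y^{1/k}$, and the natural parameters to feed into Lemma~\ref{a} are $\alpha=\tfrac1{m+6}$ for part $(i)$ and $\alpha=\tfrac1y$ for part $(ii)$.

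For $(i)$ I would prove that $B_3^1(m-2,0,0)$ is consistently $\alpha$-normal with exactly $\alpha=\tfrac1{m+6}$, and then invoke Lemma~\ref{a}$(i)$ together with the fact that the connected power hypergraph $B_2(m-3,0)$ is consistently $\alpha$-normal with the same $\alpha$. Concretely, set every pendent vertex to weight $1$ and give each pendent edge weight $\alpha$ at its non-pendent endpoint. On the two non-pendent edges $e_1,e_2$ (sharing $u,v,w$), the automorphisms $e_1\leftrightarrow e_2$ and $v\leftrightarrow w$ let me take $B(u,e_i)=p$ and $B(v,e_i)=B(w,e_i)=q$, so consistency is automatic. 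The normality equations $2q=1$, $2p+(m-2)\alpha=1$ and $pq^2=\alpha$ force $q=\tfrac12$, $p=4\alpha$, and hence $(m+6)\alpha=1$; this is precisely $\alpha=\tfrac1{m+6}$, giving $\rho(B_3^1(m-2,0,0))=\alpha^{-1/k}=(m+6)^{1/k}=\rho(B_2(m-3,0))$.

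For $(ii)$ the plan is to fix $\alpha=\tfrac1y$ and exhibit, for each of the three graphs, a consistent weighted incidence matrix that is strictly $\alpha$-subnormal, so that Lemma~\ref{a}$(ii)$ yields $\rho(\,\cdot\,)<y^{1/k}=\rho(B_2(m-4,1))$. In each case pendent vertices get weight $1$ and pendent edges get weight $\alpha$, reducing the problem to the core edges, and all verifications collapse to a polynomial inequality in $y$ closed by $y^2=(m+6)y-(m-4)$ together with the elementary bound $y>m+5$ (equivalent to $52>16$). For $B_3^1(m-3,1,0)$, the symmetric choice $B(u,e_i)=p$, $B(v,e_i)=q$, $B(w,e_i)=s$ with the vertex sums set to equality gives $s=\tfrac12$, $q=\tfrac{1-\alpha}2$, $p=\tfrac{1-(m-3)\alpha}2$, and the edge product condition $pqs\ge\alpha$ becomes $(y-(m-3))(y-1)\ge 8y$, i.e.\ $y^2-(m+6)y+(m-3)\ge0$; since the left side equals $1$ by the defining equation of $y$, the inequality is strict and the graph is strictly $\alpha$-subnormal. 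For $B_4$ the three shared vertices $u,v,w$ are interchangeable, so I would set $B(u,e_i)=B(v,e_i)=B(w,e_i)=p_i$ $(i=1,2)$ and $B(t,e_1)=r=1-(m-2)\alpha$; strict subnormality then amounts to finding a split $p_1+p_2=1$ with $p_1^3r\ge\alpha$ and $p_2^3\ge\alpha$, whose feasibility follows from $(\alpha/r)^{1/3}+\alpha^{1/3}<1$, again a consequence of $y>m+5$.

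The genuine obstacle is $B_3^3(0,m-3,0)$, which carries two distinct cycles and far less symmetry: its core consists of the cycle edges $g_1,g_2$ joining $u,v$ and the new edge $h$ joining $v,w$, and the consistency relations couple all the weights. Here I would exploit the one automorphism it does possess, namely $u\leftrightarrow w$ together with $g_2\leftrightarrow h$ (fixing $v$ and $g_1$), to collapse the seven core weights to $a=B(u,g_1)=B(w,g_1)$, $c=B(u,g_2)=B(w,h)$, $b=B(v,g_1)$ and $b'=B(v,g_2)=B(v,h)$. Setting the three core products equal to $\alpha$ then forces, through the surviving consistency relation $bc=ab'$, the identity $c^2=a^3$, after which the construction depends on the single free parameter $a$; strict $\alpha$-subnormality reduces to the simultaneous inequalities $a+a^{3/2}<1$ and $a^{-2}+2a^{-3/2}\le y-(m-3)$, which I expect to hold for all $m\ge5$ by taking $a$ near $0.56$ and using $y-(m-3)>8$. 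Verifying this last pair of inequalities, together with the handful of small values of $m$ where the asymptotics are loosest, is where the real work lies; the other two graphs are comparatively routine once the weighted incidence matrix template is in place.
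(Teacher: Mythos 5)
Your proposal is correct, and it rides on the same skeleton as the paper's proof: the characteristic polynomial $\phi_{M(a,b)}(x)=x^{m-5}[x^4-(m+6)x^2+ab]$, Lemma~\ref{p} to transfer to the powers $B_2(a,b)$, and Lemma~\ref{a} with exactly the same parameters $\alpha=\frac{1}{m+6}$ and $\beta=\frac{1}{y}$. The genuine difference lies in how the weighted incidence matrices are rigged, and it is a dual choice of where to leave slack. The paper saturates all vertex sums and all but one edge product, putting the slack on a single edge, and closes each claim by an implicit-ratio argument with no numerical free parameter: for $B_3^1(m-3,1,0)$ it shows $(1+A)^3=8+\beta>8$, and for $B_3^3(0,m-3,0)$ it shows $(A+1)(A+2)>8$, hence $A>\frac{\sqrt{33}-3}{2}>1.37$ and $A^3/(1+A)>1$. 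You instead saturate the edge products and leave the slack in the vertex sums. For $B_3^1(m-3,1,0)$ your version is cleaner than the paper's: the fully symmetric matrix reduces the whole claim to $y^2-(m+6)y+(m-3)=1>0$. For $B_3^3(0,m-3,0)$ the price is a hand-picked parameter; your reduction is correct (the surviving consistency relation $bc=ab'$ does force $c=a^{3/2}$), and the hedge ``I expect to hold'' can be discharged at once: at $a=0.56$ one computes $a+a^{3/2}\approx 0.979<1$ and $a^{-2}+2a^{-3/2}\approx 7.96<8<y-(m-3)$, the last inequality being your own observation that $y>m+5$ amounts to $52>16$. Note also that your worry about checking ``small values of $m$'' is vacuous: both inequalities in $a$ are independent of $m$, and the only $m$-dependent input, $y-(m-3)>8$, holds uniformly for all $m\ge 5$. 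Your treatments of part $(i)$ and of $B_4$ are, up to restating the paper's explicit split $B(u,e_1)=\beta^{1/3}$ as a feasibility condition, identical to the paper's Claims 1 and 4.
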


\begin{proof}
By using the amalgamating operation and the formula ($*$), we obtain the following characteristic polynomial:
$$\phi_{M(a,b)}(x)
= x^{m-5}[x^4-(m+6)x^2+ab],$$
where $a+b=m-3$. Thus $$\rho(M(m-3,0))^2=m+6,~ \rho(M(m-4,1))^2={1\over 2}\left(m+6+\sqrt{m^2+8m+52}\right).$$

Let
$$\alpha=\rho(M(m-3,0))^{-2}={1\over m+6}, ~~\beta=\rho(M(m-4,1))^{-2}.$$

{\bf Claim 1.} $B_3^1(m-2,0,0)$ is consistently $\alpha$-normal.

Now we construct a weighted incidence matrix $B$ for $B_3^1(m-2 ,0,0)$. Let $B(p,e)=1$ for every pendent vertex $p$ in edge $e$ and let $B(q,f)=\alpha$ for each non-pendent vertex $q$ in a pendent edge $f$. Suppose that $e_1$ and $e_2$ are the two edges intersecting at $u,v,w$. Let $B(u,e_i)={1-(m-2)\alpha \over 2}$ and $B(v,e_i)=B(w,e_i)={1\over 2}$ for $i=1,2$.

It can be verified that  $\sum_{e:t\in e}B(t,e)= 1$ for any vertex $t$ and $\prod_{t\in e}B(t,e)= \alpha$ for any edge $e$ in $B_3^1(m-2 ,0,0)$. Moreover, $B$ is consistent for all three cycles in $B_3^1(m-2 ,0,0)$. Therefore by Definition~\ref{alpha}, $B_3^1(m-2,0,0)$ is consistently $\alpha$-normal.

Thus by Lemmas~\ref{p} and \ref{a}~$(i)$,
$$\rho(B_3^1(m-2,0,0))=\alpha^{-{1\over k}}=\rho(M_2(m-3,0))^{2\over k}= \rho(B_2(m-3,0)).$$

{\bf Claim 2.} $B_3^1(m-3,1,0)$ is strictly and consistently $\beta$-subnormal.

We first construct a weighted incidence matrix $B$ for $B_3^1(m-3 ,1,0)$. Let $B(p,e)=1$ for every pendent vertex $p$ in edge $e$ and let $B(q,f)=\beta$ for each non-pendent vertex $q$ contained in a pendent edge $f$ . Suppose that $e_1$ and $e_2$ are the two non-pendent edges.

Write $x_i=B(u,e_i)$, $y_i=B(v,e_i)$ and $z_i=B(w,e_i)$ for $i=1,2$. Let
$$x_1+x_2=1-(m-3)\beta,\quad y_1+y_2=1-\beta,\quad z_1+z_2=1,\quad x_2y_2z_2=\beta,$$
and let $A={x_1\over x_2}={y_1\over y_2}={z_1\over z_2}>0$.

Since $x_1y_2=x_2y_1$, $x_1z_2=x_2z_1$ and $y_1z_2=y_2z_1$ for all three cycles,  $B$ is consistent according to Definition~\ref{alpha}. It is easy to verify that all equalities hold for ($a$) and ($b$) of Definition~\ref{alpha} except on the edge $e_1$.

Now we compare $x_1y_1z_1$ with $\beta$. Note that $$(1-\beta)[1-(m-3)\beta]=(x_1+x_2)(y_1+y_2)(z_1+z_2)=(1+A)^3x_2y_2z_2=(1+A)^3\beta.$$
Since $\beta^{-{1\over 2}}=\rho(M(m-4,1))$ is the largest root of $x^4-(m+6)x^2+m-4=0$, we have that $\beta^{-2}-(m-2)\beta^{-1}+(m-3)=8\beta^{-1}+1$ and thus
\begin{eqnarray*}
(1+A)^3&=&{\beta^{-1}}(1-\beta)[1-(m-3)\beta]\\
&=& \beta[\beta^{-2}-(m-2)\beta^{-1}+(m-3)]\\
&=&\beta(8\beta^{-1}+1)> 8.
\end{eqnarray*}
Therefore $A>1$ and thus
$$\prod_{t\in e_2} B(t,e_2)=x_1y_1z_1=A^3x_2y_2z_2=A^3\beta>\beta.$$
 Hence $B_3^1(m-3,1,0)$ is strictly and consistently $\beta$-subnormal.

%
%
%
%
%


{\bf Claim 3.} $B_3^3(0,m-3,0)$ is strictly and consistently $\beta$-subnormal.

We construct a weighted incidence matrix $B$ for $B_3^3(0,m-3,0)$.
Let $e_1,e_2,e_3$ be the three non-pendent edges where $\{u,v\} \subset e_2$, $\{v,w\} \subset e_3$ and $e_1$ contains all of $u,v,w$.

Write $x_1=B(u,e_1),x_2=B(u,e_2),z_1=B(w,e_1),z_3=B(w,e_3)$ and $y_i=B(v,e_i)$ for $i=1,2,3$.
Let $A={x_1\over x_2}={y_1\over y_2}={y_1\over y_3}={z_1\over z_3}>0$,
$$x_2= z_3={1\over A+1},\quad y_2=y_3={1-(m-3)\beta \over A+2}, \quad x_2y_2=z_3y_3=\beta.$$

Assign $1$ to $B(p,e)$ for every pendent vertex $p$ in edge $e$ and $\beta$ to $B(q,f)$ for each non-pendent vertex $q$  in a pendent edge $f$.

By the above equalities, $B$ is consistent for all three cycles $ue_1ve_2u$, $ve_1we_3v$ and $ue_1we_3ve_2u$. Moreover, all equalities hold for ($a$) and ($b$) of Definition~\ref{alpha} except on the edge $e_1$.

Since ${1\over \beta}=\rho(M(m-4,1))^2>m+5$, we have
$$(A+1)(A+2)={1-(m-3)\beta \over x_2y_2}={1-(m-3)\beta \over \beta}>8.$$
Thus $A>{\sqrt{33}-3\over 2}>1.37$.
Therefore,
$${x_1y_1z_1\over \beta}={A^3x_2y_2z_3\over \beta} ={A^3\over 1+A}={1\over A^{-3} +A^{-2}}>1.$$

As $\prod_{t\in e_1} B(t,e_1)=x_1y_1z_1>\beta$,
 $B_3^3(0,m-3,0)$ is strictly and consistently $\beta$-subnormal.

%
{\bf Claim 4.} $B_4$ is strictly and consistently $\beta$-subnormal.

We construct a weighted incidence matrix $B$ for $B_4$. Let $e_1,e_2$ be the two non-pendent edges in $B_4$ where $t\in e_2$.
Assign $1$ to $B(p,e)$ for every pendent vertex $p$ in edge $e$ and $\beta$ to $B(q,f)$ for each non-pendent vertex $q$ in a pendent edge $f$. Let
$${B(u,e_2)\over B(u,e_1)}={B(v,e_2)\over B(v,e_1)}={B(w,e_2)\over B(w,e_1)}=A,\quad B(t,e_2)=1-(m-2)\beta,$$  $$B(u,e_1)=B(v,e_1)=B(w,e_1)={1\over A+1}=\beta^{1\over 3}.$$

It is easy to check that $B$ is consistent and all equalities hold for ($a$) and ($b$) of Definition~\ref{alpha} except on the edge $e_2$. Since $A=\sqrt[3]{1\over \beta}-1>1$ and $${1\over \beta}=\rho(M(m-4,1))^2>m+5\ge 10$$ when $m\ge 5$,  we have
\begin{eqnarray*}
{1\over \beta}\prod_{s\in e_2} B(s,e_2) &=&[1-(m-2)\beta]A^3\\
&=&\left({1\over\beta}-(m-2)\right)(1-\sqrt[3]{\beta})^3\\
 &>&  7\left(1-\sqrt[3]{1\over 10}\right)^3 > 1.
\end{eqnarray*}
Thus $B_4$ is strictly and consistently $\beta$-subnormal.

By Claims $2,3,4$,  Lemmas~\ref{p} and \ref{a} $(ii)$, we have
 $$\rho(H)< \beta^{-{1\over k}}=\rho(M(m-4,1))^{2\over k}= \rho(B_2(m-4,1))$$
 for  $H\in\{ B_3^1(m-3,1,0),B_3^3(0 ,m-3,0), B_4\}$.
\end{proof}

\section{The first five unicyclic $k$-graphs with larger spectral radii in $\mathbb{U}^{m}$}

We classify $\mathbb{U}^{m}$ by the number of non-pendent vertices. Denote by $\mathbb{U}_i^{m}$ the set of hypergraphs in $\mathbb{U}^{m}$ with exactly $i$ non-pendent vertices. Since the least possible length of a cycle in a simple hypergraph is two, we have $i\ge 2$. 

 Note that the $k$th power of $G_1$ in Figure~\ref{g} is in $\mathbb{U}_3^{m}$. In \cite{FTPL15}, $G_1^k$ has been proved to uniquely attain the largest spectral radius over all linear $k$-graphs in $\mathbb{U}^{m}$. Hence the following will be focused on non-linear $k$-graphs.

Let $H$ be a non-linear $k$-graph in $\mathbb{U}_i^{m}$. We claim that the length of the unique cycle of $H$ is $2$. If the length is at least 3, then by Lemma~\ref{1cyclic} $H$ can not have two vertices sharing two common edges which forms another cycle. Hence $H$ is linear which yields a contradiction. Let $ue_1ve_2u$ be the unique cycle in $H$.

  First we consider that $H$ is in $\mathbb{U}_2^{m}$. By Proposition~\ref{prop} $(i)$, $e_1,e_2$ are the only two non-pendent edges that contains $u$ and $v$. As the remaining edges (if there exists any) are pendent, we have $H\cong U_2(a,b)$ for some nonnegative integers $a$ and $b$. Thus $k$-graphs in $\mathbb{U}_2^{m}$ are in the form of $U_2(a,b)$ with $a,b\in \mathbb{N}$.

\begin{Lemma}\label{u2}
Let $a\ge b \ge 1$  and $a+b=m-2$. Then
$$ \rho(U_2(a,b))< \rho(U_2(a+1,b-1))\le \rho(U_2(m-2,0)).$$
\end{Lemma}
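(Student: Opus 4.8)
The plan is to reduce the entire statement to the underlying multi-graph $G(a,b)$ via the power formula of Lemma~\ref{p}, and then exploit the explicit characteristic polynomial already computed inside the proof of Lemma~\ref{graph}. Since $U_2(a,b)$ is by definition the $k$th power of $G(a,b)$, Lemma~\ref{p} gives $\rho(U_2(a,b))=\rho(G(a,b))^{2/k}$. Because $t\mapsto t^{2/k}$ is strictly increasing for $t>0$, both inequalities in the statement are equivalent to the corresponding inequalities for the ordinary multi-graphs, namely $\rho(G(a,b))<\rho(G(a+1,b-1))\le\rho(G(m-2,0))$. Hence it suffices to compare spectral radii of the graphs $G(\cdot,\cdot)$.

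Next I would invoke the identity $\phi_{G(a,b)}(x)=x^{m-4}[x^4-(m+2)x^2+ab]$ from the proof of Lemma~\ref{graph}. The factor $x^{m-4}$ contributes only the eigenvalue $0$, so $\rho(G(a,b))$ is the largest root of the biquadratic $x^4-(m+2)x^2+ab$; solving for $x^2$ yields
$$\rho(G(a,b))^2=\frac{1}{2}\Big((m+2)+\sqrt{(m+2)^2-4ab}\Big),$$
where the discriminant is nonnegative since $ab\le((a+b)/2)^2=((m-2)/2)^2<(m+2)^2/4$. The crucial observation is that, with the sum $a+b=m-2$ held fixed, this quantity depends on the pair $(a,b)$ only through the product $ab$, and is \emph{strictly decreasing} in $ab$. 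Thus comparing the spectral radii reduces entirely to comparing the products of pendent-edge counts.

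For the first inequality I would compute $(a+1)(b-1)=ab-(a-b)-1$; since $a\ge b$ we have $a-b\ge0$, so $(a+1)(b-1)\le ab-1<ab$, the product strictly decreases, and hence $\rho(G(a,b))<\rho(G(a+1,b-1))$. For the second inequality I would use $b\ge1$, which forces $(a+1)(b-1)\ge0=(m-2)\cdot0$; since $\rho(G(m-2,0))^2=m+2$ corresponds to product $0$, monotonicity in the product gives $\rho(G(a+1,b-1))\le\rho(G(m-2,0))$, with equality exactly when $b=1$ (in which case $U_2(a+1,b-1)=U_2(m-2,0)$). Applying $(\cdot)^{2/k}$ together with Lemma~\ref{p} transports both inequalities back to the $U_2$ hypergraphs, completing the argument.

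There is no serious obstacle here: the lemma collapses to the elementary fact that, among pairs with fixed sum, shifting a unit from the smaller to the larger coordinate strictly decreases the product $ab$. The only points needing care are justifying that $\rho(G(a,b))$ is genuinely the \emph{largest} root of the quartic factor (rather than a spurious root coming from $x^{m-4}$ or from a negative square root), which follows from $\rho$ being the largest eigenvalue by Perron--Frobenius, and tracking the equality case so that the weak inequality $\le$ is recorded correctly. One could alternatively obtain the strict first inequality directly from Corollary~\ref{pm} by moving the pendent edges at $v$ onto $u$, but the polynomial computation handles both inequalities and the equality case uniformly.
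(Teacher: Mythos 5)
Your proof is correct, but it takes a genuinely different route from the paper's. The paper proves the strict inequality with the edge-moving perturbation machinery: $U_2(a+1,b-1)$ arises from $U_2(a,b)$ either by moving one pendent edge from $v$ to $u$ or by moving $a-b+1$ pendent edges from $u$ to $v$, and since both moves produce the same hypergraph, Lemma~\ref{e} forces $\rho(U_2(a,b))<\rho(U_2(a+1,b-1))$; the weak inequality then follows by induction along the chain. You instead collapse the whole problem onto the base multigraph via Lemma~\ref{p} (legitimately --- the paper's remark extends it to loopless multigraphs, and $G(a,b)$ is one), and read everything off the explicit quartic $x^4-(m+2)x^2+ab$: with $a+b$ fixed, $\rho(G(a,b))^2=\tfrac12\bigl((m+2)+\sqrt{(m+2)^2-4ab}\,\bigr)$ is strictly decreasing in the product $ab$, and $(a+1)(b-1)<ab$ while $(a+1)(b-1)\ge 0$. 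Your approach buys the complete ordering of all $U_2(a,b)$ at once, exact values of the spectral radii, and a transparent equality case ($b=1$), with no induction; the paper's approach is computation-free, needs no characteristic polynomial, and is the template that generalizes to the settings (e.g.\ the $U_3^i$ and $B_3^j$ families) where no closed-form polynomial is available. One small correction to your closing remark: Corollary~\ref{pm} moves \emph{all} non-common incident edges of $v$ onto $u$, so it would produce $U_2(m-2,0)$ and hence only the composite inequality $\rho(U_2(a,b))<\rho(U_2(m-2,0))$ (except when $b=1$); the single-edge move needed for your ``first inequality'' is governed by Lemma~\ref{e}, which is exactly what the paper invokes.
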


\begin{proof}
Note that $U_2(a+1,b-1)$ can be obtained from $U_2(a,b)$ by
 moving one pendent edge from $v$ to $u$, or by moving $a-b+1$ pendent edges from $u$ to $v$,  by Lemma~\ref{e} we have $\rho(U_2(a,b))<\rho(U_2(a+1,b-1))$. By induction, $\rho(U_2(a+1,b-1))\le \rho(U_2(m-2,0))$ with equality if and only if $b=1$.
\end{proof}



Now we consider $H\in \mathbb{U}_3^{m}$. Let $w$ be the remaining non-pendent vertex of $H$.  
If $w$ is in a cycle edge say $e_1$, then by Proposition~\ref{prop} $(ii)$, $w\notin e_2$. Thus $H\cong U_3^1(a,b;c)$ for some integers $a,b$ and $c\ge 1$. If $w$ is not on the cycle, then there is an edge outside the cycle containing $w$ and one of $u,v$, say $u$. Thus $H\cong U_3^2(a,b;c)$ for some integers $a,b$ and $c\ge 1$. Therefore, non-linear $k$-graphs in $\mathbb{U}_3^{m}$ are either in the form of $U_3^1(a,b;c)$ or $U_3^2(a,b;c)$ with $c\ge 1$.


\begin{Lemma}\label{u3}
Let $H$ be a non-linear $k$-graph in $\mathbb{U}_3^{m}\backslash \{U_3^1(m-3,0;1), U_3^2(m-4,0;1)\}$. If $m\ge 8$, then
$$ \rho(H) < \rho(U_2(m-4,2)) \le \rho(U_3^2(m-4,0;1))< \rho(U_3^1(m-3,0;1)), $$
equality holds only if $m=8$.
\end{Lemma}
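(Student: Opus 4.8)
The statement is a chain of three comparisons, and I would prove them with three different tools. Throughout I use the classification established just before the lemma: a non-linear $H\in\mathbb{U}_3^m$ is either $U_3^1(a,b;c)$ with $a+b+c=m-2$ or $U_3^2(a,b;c)$ with $a+b+c=m-3$, and $c\ge 1$ in both families. For the rightmost inequality $\rho(U_3^2(m-4,0;1))<\rho(U_3^1(m-3,0;1))$ I would apply Lemma~\ref{ne} to the cycle edge $e_1$ (containing $u,v$) and the edge $f$ joining $u$ to $w$ in $U_3^2(m-4,0;1)$; these meet only in $u$, so $|e_1\cap f|=1$ and $r=k-1$. Moving the single pendent edge at $w$ onto a pendent vertex of $e_1$ puts $w$'s branch on a cycle edge and turns $f$ into an ordinary pendent edge at $u$, producing exactly $U_3^1(m-3,0;1)$ while strictly increasing the spectral radius. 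For the middle inequality $\rho(U_2(m-4,2))\le\rho(U_3^2(m-4,0;1))$ the key point is that every $U_3^2(a,0;c)$ is a power hypergraph: the $k$th power of the unicyclic multigraph consisting of a double edge $uv$, $a$ pendent edges at $u$, and a path joining $u$ to $w$ with $c$ pendent edges at $w$. Amalgamating by ($*$) yields the characteristic polynomial $x^{m-4}[x^4-(m+2)x^2+c(m+1-c)]$, which for $c=1$ equals $\phi_{G_3}(x)$; hence by Lemma~\ref{p} and the relation $\rho(G_3)\ge\rho(G(m-4,2))$ of Lemma~\ref{graph}, $\rho(U_3^2(m-4,0;1))=\rho(G_3)^{2/k}\ge\rho(G(m-4,2))^{2/k}=\rho(U_2(m-4,2))$, with equality only at $m=8$.

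The bulk of the work is the leftmost inequality $\rho(H)<\rho(U_2(m-4,2))$ for the non-special $H$. In both families I first apply Corollary~\ref{pm} to the adjacent pair $(u,v)$, moving $v$'s pendent edges onto $u$, so that the spectral radius strictly increases when $b\ge 1$; this reduces every graph to one with $b=0$. For $U_3^2$ the displayed polynomial then turns the comparison into a condition on the constant term: $\rho(U_3^2(a+b,0;c))<\rho(U_2(m-4,2))$ is equivalent to $c(m+1-c)>2(m-4)$, which holds for every $c\ge 2$ because on the admissible range $c(m+1-c)\ge 2(m-1)>2(m-4)$. The reduction misses only $c=1$, $b\ge 1$, which I would settle directly: writing $t_0=\rho(G(m-4,2))^2$ and $p(t)$ for the reduced degree-six polynomial of $U_3^2(a,b;1)$, a short computation gives $p(t_0)=\big(a(b-1)+4\big)t_0-ab>0$, and since the larger critical point of $p$ lies below $\tfrac{2(m+2)}{3}<t_0$, the largest root of $p$, namely $\rho(U_3^2(a,b;1))^2$, stays below $t_0$.

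The family $U_3^1$ is not a power hypergraph, so after reducing to $U_3^1(a,0;c)$ I would argue by edge operations. By Lemma~\ref{e0}, at any configuration the Perron vector satisfies $x_u\ge x_w$ or $x_w\ge x_u$, and moving one pendent edge toward the heavier of $u,w$ strictly raises $\rho$; hence the profile $c\mapsto\rho(U_3^1(m-2-c,0;c))$ has no interior local maximum, so on $2\le c\le m-2$ its maximum is attained at $c=2$ or at $c=m-2$. The endpoint $c=m-2$ is $U_3^1(0,0;m-2)$, covered by Lemma~\ref{hgraph}; the endpoint $c=2$ is $U_3^1(m-4,0;2)$, which I would treat by a consistent weighted incidence matrix with $\alpha=\rho(G(m-4,2))^{-2}$ exactly as in the proof of Lemma~\ref{hgraph}, reducing strict $\alpha$-subnormality to the single inequality $(1-2\alpha)\big(\sqrt{\alpha^{-1}-(m-4)}-1\big)^2>1$ for all $m\ge 8$; Lemma~\ref{a}$(ii)$ then delivers the bound.

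I expect the $U_3^1$ family to be the main obstacle. Establishing that the discrete profile in $c$ is genuinely valley-shaped, so that only the two extreme configurations can dominate, must be done with care, and the decisive $\alpha$-subnormality estimate at $c=2$ has a margin of only order $1/m$; it therefore requires a non-asymptotic verification at small $m$ rather than a crude bound, much like the corresponding step inside Lemma~\ref{hgraph}.
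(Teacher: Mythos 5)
Your proposal has one genuine gap: the subfamily $U_3^1(a,b;1)$ with $a\ge b\ge 1$ (so $a+b=m-3$) is never handled. Your universal first step --- Corollary~\ref{pm} applied to the pair $(u,v)$ --- sends such an $H$ to $U_3^1(a+b,0;1)=U_3^1(m-3,0;1)$, which is exactly the excluded hypergraph and lies strictly \emph{above} $U_2(m-4,2)$ in the final ordering; so this reduction yields only the useless bound $\rho(H)<\rho(U_3^1(m-3,0;1))$. You spotted the identical failure in the $U_3^2$ family (``the reduction misses only $c=1$, $b\ge 1$'') and repaired it with a characteristic-polynomial computation, which is available there because $U_3^2(a,b;1)$ is a power hypergraph; but $U_3^1(a,b;1)$ is \emph{not} a power hypergraph (its cycle edge $e_2$ contains the three non-pendent vertices $u,v,w$), and your valley-profile argument is explicitly confined to $U_3^1(a,0;c)$ with $2\le c\le m-2$, so graphs such as $U_3^1(m-4,1;1)$ fall through entirely. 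The repair is cheap and is precisely the paper's move: when $c=1$ and $a\ge b\ge 1$, then $a\ge (m-3)/2\ge 2$ for $m\ge 8$, so apply Corollary~\ref{pm} with $u_1=v$, $u_2=w$ (collapse $w$'s pendent edge into $v$ rather than $v$'s edges into $u$) to get $\rho(H)<\rho(U_2(a,b+1))\le \rho(U_2(m-4,2))$ by Lemma~\ref{u2}, since both $a\ge 2$ and $b+1\ge 2$.

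Apart from this missing case, your argument is sound, and in its hard part genuinely different from the paper's. The paper kills all of $U_3^1(a,b;c)$ with $a\ge 2$ by this same single collapse $w\to v$ into $U_2(a,b+c)$ plus the ordering of Lemma~\ref{u2}, reserving Lemma~\ref{hgraph} for $a\le 1$, $b=0$, and disposes of the whole $U_3^2$ family in one stroke by Lemma~\ref{ne} (reducing to the $U_3^1$ case); it never needs your Perron-vector valley argument, nor a fresh $\alpha$-subnormality computation at $U_3^1(m-4,0;2)$. Your alternative machinery does check out where it applies: I verified the characteristic polynomial $x^{m-4}\bigl[x^4-(m+2)x^2+c(m+1-c)\bigr]$ for the multigraph generating $U_3^2(a,0;c)$, the cubic $p(t)=t^3-(m+2)t^2+(ab+b+m)t-ab$ for $U_3^2(a,b;1)$ together with $p(t_0)=\bigl(a(b-1)+4\bigr)t_0-ab>0$ and the critical-point bound $\frac{2(m+2)}{3}<t_0$, the no-interior-local-maximum step via Lemma~\ref{e0}, and the endpoint inequality $(1-2\alpha)\bigl(\sqrt{\alpha^{-1}-(m-4)}-1\bigr)^2>1$ for $m\ge 8$ (the margin is indeed of order $1/m$, e.g.\ about $1.25$ at $m=8$). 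Your treatment of the middle and rightmost inequalities coincides with the paper's (powers of $G_3$ and $G(m-4,2)$ via Lemmas~\ref{p} and \ref{graph}; Lemma~\ref{ne} for the last step). So the verdict is: correct in outline and in all verified details, but incomplete as stated, with a one-line fix borrowed from the paper's Case 1.
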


\begin{proof}
We discuss by two cases.

{\bf Case 1.}  $H \cong U_3^1(a,b;c)$.

Suppose that $a\ge b$.
Since $H\ncong U_3^1(m-3,0;1)$, $b\ge 1$ or $c\ge 2$.

 If $a\ge 2$, then by Corollary~\ref{pm}, take $u_1=v$ and $u_2=w$, we have $$\rho(H) < \rho(U_2(a,b+c)) \le \rho(U_2(m-4,2)).$$ The second inequality follows from Lemma~\ref{u2} for $a\ge 2$ and $b+c\ge 2$.

If $a=b=1$, then by Corollary~\ref{pm}, take $u_1=u$ and $u_2=v$, we have that $\rho(H) < \rho(U_3^1(2,0;c))$ and $c=m-4>2$, which can be ascribed to $a\ge 2$.

If $a\le 1$ and $b=0$, then by Lemma~\ref{hgraph}, $\rho(H) < \rho(U_2(m-4,2))$.

{\bf Case 2.} $H \cong U_3^2(a,b;c)$.

Since $H\ncong U_3^2(m-4,0;1)$, $b\ge 1$ or $c\ge 2$.
Then we can obtain $U_3^1(a+1,b;c)$ from $H$ by moving $c$ pendent edges from $w$ to an arbitrary pendent vertex in a cycle edge. This operation coincides with the one of Lemma~\ref{ne}. Thus for $b \ge 1$ or $c\ge 2$, $$\rho(H)< \rho(U_3^1(a+1,b;c))< \rho(U_2(m-4,2)),$$ where the second inequality follows from  Case $1$.

Since $U_2(m-4,2)$ and $U_3^2(m-4,0;1)$ are the $k$th powers of $G(m-4,2)$ and $G_3$ respectively (See Figure~\ref{g}),  by Lemmas~\ref{p} and \ref{graph},
$$\rho(U_2(m-4,2))=\rho(G(m-4,2))^{2\over k} \le \rho(G_3)^{2\over k}=\rho(U_3^2(m-4,0;1)),$$
and equality holds only if $m=8$.

It remains to prove the last inequality of this lemma. Observe that $U_3^1(m-3,0;1)$ can be obtained from $U_3^2(m-4,0;1)$ by moving the pendent edge attached at $w$ from $w$ to an arbitrary pendent vertex in a cycle edge. By Lemma~\ref{ne}, $$\rho(U_3^2(m-4,0;1))< \rho(U_3^1(m-3,0;1)).$$

The proof is completed.
\end{proof}

Next we discuss $H\in  \mathbb{U}_4^{m}$.


\begin{Lemma}\label{u4}
Let $H$ be a non-linear $k$-graph in $\mathbb{U}_4^{m}$ with $m\ge8$. Then $$\rho(H)<\rho (U_2(m-4,2)).$$
\end{Lemma}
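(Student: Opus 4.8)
The plan is to reduce $H$, by a single spectral-radius-increasing edge operation, to a non-linear hypergraph with only three (or two) non-pendent vertices, and then quote Lemma~\ref{u3} (or Lemma~\ref{u2}). Exactly as in the paragraph preceding Lemma~\ref{u2}, the non-linearity of $H$ forces its unique cycle to have length $2$; write it $ue_1ve_2u$. Then $u,v$ are non-pendent and, by Lemma~\ref{1cyclic} together with Proposition~\ref{prop}, $e_1$ and $e_2$ are the only edges meeting both $u$ and $v$. Let $p,q$ be the two remaining non-pendent vertices. Deleting the pendent edges, the non-pendent part of $H$ consists of the double edge $\{e_1,e_2\}$ on $u,v$ together with a forest attaching $p$ and $q$, so after relabelling I may assume $p$ is joined to the rest of $H$ by a single non-pendent edge $g$ and carries only pendent edges otherwise.

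First I would apply Corollary~\ref{pm} to the adjacent pair $(z,p)$, where $z$ is a non-pendent vertex of $g$ (one of $u$, $v$, or $q$): moving every incident edge of $p$ except $g$ onto $z$ makes $p$ pendent, leaves the cycle $ue_1ve_2u$ and hence unicyclicity intact, introduces no multiple edges, and produces a simple hypergraph $H'\in\mathbb{U}_3^m$ with $\rho(H)<\rho(H')$. Whenever $H'\notin\{U_3^1(m-3,0;1),\,U_3^2(m-4,0;1)\}$, Lemma~\ref{u3} yields $\rho(H')<\rho(U_2(m-4,2))$, and the chain $\rho(H)<\rho(H')<\rho(U_2(m-4,2))$ finishes the proof.

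The hard part will be that this contraction can terminate precisely at one of the two exceptional graphs $U_3^1(m-3,0;1)$ or $U_3^2(m-4,0;1)$, whose spectral radii \emph{exceed} $\rho(U_2(m-4,2))$ (by Lemma~\ref{u3}), so that the inequality chain collapses. The remedy is to exploit the extra freedom afforded by the second off-cycle vertex: in such a situation one cycle vertex, say $v$, carries no pendent edge, while some non-pendent vertex $w$ on a cycle edge carries a lone pendent edge, and applying Corollary~\ref{pm} to $(v,w)$ instead transfers that edge onto $v$ and lands on a hypergraph of type $U_3^2(a,b;c)$ (or $U_3^1(a,b;c)$) with $b\ge1$, which is non-exceptional and hence covered by Case~$2$ (respectively Case~$1$) of Lemma~\ref{u3}; alternatively, when the obstruction lies off the cycle one pushes a pendent edge onto the cycle via Lemma~\ref{ne}, or contracts both extra vertices down to a $U_2(a,b)$ with $\min\{a,b\}\ge2$ and invokes Lemma~\ref{u2}. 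The genuine work is therefore a finite case analysis on the positions of $p$ and $q$ relative to the $2$-cycle, exhibiting in each case one admissible operation that raises $\rho$ yet steers the result away from the two exceptional graphs; throughout, the hypothesis $m\ge8$ is what keeps Lemma~\ref{u3} in force.
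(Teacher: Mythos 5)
Your overall strategy---reduce to $\mathbb{U}_3^m$ (or $\mathbb{U}_2^m$) by spectral-radius-increasing edge moves and invoke Lemma~\ref{u3} (or Lemma~\ref{u2})---is exactly the paper's, and you have correctly isolated the crux: the image of the move must avoid the two exceptional graphs $U_3^1(m-3,0;1)$ and $U_3^2(m-4,0;1)$. But as written the proposal has a genuine gap: the ``finite case analysis'' you announce at the end \emph{is} the proof, and it is never carried out; worse, two of the remedies you sketch fail in concrete configurations, so the completion cannot be taken on faith. First, applying Corollary~\ref{pm} to $(v,w)$ requires $v$ and $w$ to be adjacent, which is unavailable when the obstruction sits off the cycle. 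Take $H$ with cycle $ue_1ve_2u$, $m-6$ pendent edges at $u$, none at $v$, and two off-cycle non-pendent vertices $w,t$, each joined to $u$ by its own non-pendent edge and each carrying exactly one pendent edge: contracting either $w$ or $t$ onto $u$ (the only admissible $z$ for your default move) yields precisely $U_3^2(m-4,0;1)$, and $v$ is adjacent to neither $w$ nor $t$, so your first remedy does not apply. Second, in this same configuration your alternative ``contract both extra vertices down to a $U_2(a,b)$ with $\min\{a,b\}\ge 2$'' produces $U_2(m-2,0)$---the maximizer---which is useless. What does work here is a Lemma~\ref{e} move merging the pendent edges of $w$ and $t$ onto one of them (both choices give $U_3^2(m-5,0;2)$, which is non-exceptional since $c=2$), or the paper's two-step route pushing $w$ onto the cycle via Lemma~\ref{ne} first.

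This last observation is exactly how the paper sidesteps the whole difficulty: it splits into three cases by the position of the two extra non-pendent vertices $w,t$ relative to the $2$-cycle, and in every case merges their pendent edges onto \emph{one of them} rather than onto a cycle vertex. Both on the cycle gives $U_3^1(a,b;c+d)$ with $c+d\ge 2$ (Lemma~\ref{e}); one on and one off but adjacent gives $U_3^1(a,b;c+d+1)$ (Corollary~\ref{pm}); the non-adjacent and fully off-cycle configurations are first reduced to the earlier cases by Lemma~\ref{ne}. Since the third non-pendent vertex of the image always retains at least two pendent edges while both exceptional graphs have $c=1$, the exceptional targets are avoided automatically and no patching is ever needed. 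Your plan is completable along these lines, but until the case analysis is actually exhibited---with moves chosen so that no branch terminates at an exceptional graph---the proof is not done.
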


\begin{proof}
For $H$ being non-linear, recall that the unique cycle is denoted by $ue_1ve_2u$ with length $2$. Let $w$ and $t$ be the remaining two non-pendent vertices of $H$ and let $a,b,c,d$ be the number of pendent edges attached at $u,v,w,t$, respectively.

We discuss by the location of $w$ and $t$.

{\bf Case 1.} $w,t$ are on the cycle. By Lemma~\ref{1cyclic}, each of $w,t$ is contained in only one non-pendent edge. Thus $c\ge 1, d\ge 1$. By moving all pendent edges from $w$ to $t$ or from $t$ to $w$, we can both obtain $U_3^1(a,b;c+d)$ with $c+d\ge 2$. Then by Lemmas~\ref{e} and \ref{u3}, $\rho(H)<\rho(U_3^1(a,b;c+d))<\rho(U_2(m-4,2))$.

{\bf Case 2.} Only one of $w,t$ is on the cycle, say $w$. Then $d\ge 1$, otherwise $t$ is a pendent vertex or $H$ is not unicyclic.

{\bf Subcase 2.1.}
 $w$ and $t$
are connected by an edge $f$. Take $u_1=w$ and $u_2=t$,  by Corollary~\ref{pm} and Lemma~\ref{u3} we have  $\rho(H)<\rho(U_3^1(a,b;c+d+1))<\rho(U_2(m-4,2))$ for $c+d\ge 1$.

{\bf Subcase 2.2.}
 $w$ and $t$ are not adjacent. Then $t$ is adjacent to $u$ or $v$. Suppose that $w\in e_1\backslash e_2$. Moving $d$ pendent edges from $t$ to an arbitrary pendent vertex in $e_2$, by Lemma~\ref{ne} we obtain a hypergraph $H'$ of Case $1$ with larger spectral radius. Thus $\rho(H)<\rho(H')<\rho(U_2(m-4,2))$.

{\bf Case 3.} $w$ and $t$ are outside the cycle.
Then at least one of $w$ and $t$, say $w$, is adjacent to a vertex on the cycle, say $u$. Suppose that $u,w$ are connected by an edge $f'$ outside the cycle. Moving all edges incident with $w$ expect $f'$ from $w$ to an arbitrary pendent vertex on the cycle, we obtain a hypergraph $H''$ of Case $2$. By Lemma~\ref{ne} and the discussion in Case 2,
 $\rho(H)<\rho(H'')<\rho(U_2(m-4,2)).$

This completes the proof.
\end{proof}

\begin{Lemma}\label{ui}
Let $i\ge 3$ and $H$ be a $k$-graph in $\mathbb{U}_i^{m}$. Then
$$\rho(H)< \max\{\rho(F): \text{$F$ is a $k$-graph in~} \mathbb{U}_{i-1}^{m}\}.$$
\end{Lemma}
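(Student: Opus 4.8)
The plan is to prove the statement by a single reduction step: from an arbitrary $H\in\mathbb{U}_i^m$ I would construct, via one spectral-radius-increasing edge move, a hypergraph $H'\in\mathbb{U}_{i-1}^m$. Once this is done, $\rho(H)<\rho(H')\le\max\{\rho(F):F\text{ a }k\text{-graph in }\mathbb{U}_{i-1}^m\}$, which is exactly the claim. The engine for the move will be Corollary~\ref{pm}, and the work is entirely in choosing the right pair of adjacent non-pendent vertices. As a preliminary observation I would note that a pendent edge contains exactly one non-pendent vertex, whereas a non-pendent edge (having at most $k-2$ pendent vertices) contains at least two; hence the non-pendent edges, restricted to the non-pendent vertices, form a connected structure carrying the unique cycle $C$ of $H$ (unique by Lemma~\ref{1cyclic}). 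I then split according to whether some non-pendent vertex lies off $C$.

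In the first case, where a non-pendent vertex lies off $C$, I would pick a non-pendent vertex $w$ at maximum distance from $C$. Using uniqueness of the cycle (Lemma~\ref{1cyclic}, together with Lemma~\ref{cyclic}), a second non-pendent edge at $w$ would force either a non-pendent vertex strictly farther from $C$ or a second cycle, so $w$ lies in exactly one non-pendent edge $f$ and all its other incident edges are pendent; at least one such pendent edge exists because $w$ is non-pendent. Choosing any other non-pendent vertex $w'\in f$, Proposition~\ref{prop}$(i)$ and Lemma~\ref{1cyclic} force $w$ and $w'$ to share only $f$. Applying Corollary~\ref{pm} with $u_1=w'$ and $u_2=w$ then moves every pendent edge of $w$ onto $w'$, leaving $w$ incident only to $f$, so $w$ becomes pendent while no new non-pendent vertex is created and $C$ is untouched; thus the output lies in $\mathbb{U}_{i-1}^m$, and since both endpoints have edges other than $f$ the move is non-vacuous, giving $\rho(H')>\rho(H)$.

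In the remaining case every non-pendent vertex lies on $C$, so $C$ has length $i\ge 3$ and $H$ is linear. Here I would take two consecutive cycle vertices $v_1,v_2$ sharing the cycle edge $e_1$ and apply Corollary~\ref{pm} with $u_1=v_1$, $u_2=v_2$: this moves the second cycle edge at $v_2$ and all pendent edges of $v_2$ onto $v_1$, turning $e_1$ into a pendent edge at $v_1$, making $v_2$ pendent, and shortening $C$ by one. The result again lies in $\mathbb{U}_{i-1}^m$ with strictly larger spectral radius. (Alternatively, Lemma~\ref{ne} could be invoked on two consecutive cycle edges, but Corollary~\ref{pm} treats both cases uniformly.)

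The step I expect to be the main obstacle is the bookkeeping confirming that each output truly lands in $\mathbb{U}_{i-1}^m$: one must verify that the rerouted edges do not coincide with existing edges (simplicity), that exactly one non-pendent vertex is lost and none is promoted, and that connectivity and the single-cycle property persist. This is immediate in the first case, but in the second case it requires care at the boundary $i=3$, where the rerouted edge and the surviving cycle edge both join $v_1$ and $v_3$; Proposition~\ref{prop}$(i)$ permits this double adjacency, and since the two edges carry different pendent vertices they remain distinct, so no multiple edge arises and the resulting $2$-cycle is precisely the cycle of an $H'\in\mathbb{U}_2^m$.
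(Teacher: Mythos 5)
You have a genuine gap, and it sits exactly where you predicted trouble but in a different spot: the case split itself. The dichotomy ``some non-pendent vertex lies off $C$'' is ambiguous for hypergraphs, and under the natural reading (off $C$ meaning not a vertex of any cycle edge) your Case 2 breaks. A non-pendent vertex can lie \emph{inside} a cycle edge without being one of the joint vertices $v_1,\dots,v_s$ of the cycle; the vertex $w$ of $U_3^1(a,b;c)$ is exactly such a vertex, and these non-linear graphs are the paper's central objects. For $H\cong U_3^1(a,b;c)$ with $a,b,c\ge 1$ (so $i=3$), every non-pendent vertex lies on the cycle in the subgraph sense, yet the cycle has length $2\ne i$ and $H$ is non-linear, so both inferences opening your Case 2 are false; worse, the move you describe is not even defined there, since the ``second cycle edge at $v_2$'' already contains $v_1$, which Definition~\ref{em} forbids, and Corollary~\ref{pm} applied to the two joints would move only pendent edges because \emph{both} cycle edges are common edges of $v_1,v_2$. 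The repair is to declare ``off $C$'' to mean ``not a joint vertex'': then Case 2 legitimately yields cycle length $s=i\ge 3$ and linearity, and your shortening move goes through (including your correct $i=3$ boundary check); and Case 1 must absorb the inside-a-cycle-edge vertices, which your transplant argument does cover verbatim (such a $w$ has the cycle edge as its unique incident non-pendent edge, and moving its pendent edges to a joint $w'$ of that edge lands in $\mathbb{U}_{i-1}^{m}$), but your sketch never says so, and the maximum-distance selection and the ``strictly farther or second cycle'' alternative must be restated for distance to the joint set. You should also discharge the hypothesis $H'\ncong H$ of Corollary~\ref{pm} explicitly, which is immediate since $H'$ has $i-1$ non-pendent vertices while $H$ has $i$.

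For comparison, the paper sidesteps the cycle geometry entirely: it splits according to the incidence pattern of the non-pendent vertices, namely either all of them lie in a single edge $f$ (then it locates one whose other incident edges are all pendent and applies Corollary~\ref{pm} within $f$), or some two non-pendent vertices share no common edge (then it performs the two symmetric moves along a shortest path and invokes Lemma~\ref{e}, which only guarantees that \emph{one} of the two resulting graphs in $\mathbb{U}_{i-1}^{m}$ has larger spectral radius --- the price of not designating a direction). Your single-move strategy buys a canonical target in each case, at the cost of the structural analysis above. It is worth noting that once your split is repaired it is exhaustive by fiat (joint vertex or not), whereas the paper's two cases as literally written do not cover a linear graph whose cycle has length $3$ and whose non-pendent vertices are exactly the three joints (such as $G_1^k$): there every two non-pendent vertices share an edge, yet no single edge contains all three. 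So your route, properly patched, is genuinely different and in that one respect tidier than the paper's.
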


\begin{proof}
First we consider that all non-pendent vertices of $H$ are in the same edge say $f$. Then there exists a non-pendent vertex $w$ whose incident edges except $f$ are pendent edges. Otherwise, each non-pendent vertex is incident with at least two non-pendent edges and we can find two distinct cycles with length 2 when $i\ge 3$, which contradicts Lemma~\ref{1cyclic}.
Now move all pendent edges attached at $w$ from $w$ to another non-pendent vertex $t$ in $f$, we obtain a $k$-graph in $\mathbb{U}_{i-1}^{m}$, denoted $H'$. By Corollary~\ref{pm}, take $u_1=t$ and $u_2=w$, we have $\rho(H)<\rho(H').$

Now suppose $H$ have two non-pendent vertices $u,v$ that do not share any common edge. Let $P=ue_1\cdots e_sv$ be a shortest path connecting $u$ and $v$ in $H$ where $s\ge 2$. Let $H_1$ be the $k$-graph obtained from $H$ by moving all edges incident with $u$ except $e_1$ from $u$ to $v$. Let $H_2$ be the $k$-graph obtained from $H$ by moving all edges incident with $v$ except $e_s$ from $v$ to $u$. Note that $H_1$ and $H_2$ are in $\mathbb{U}_{i-1}^{m}$. By Lemma~\ref{e}, $\rho(H) < \max\{\rho(H_1),\rho(H_2)\}.$

For both cases, $\rho(H)$ is bounded up by the spectral radius of a $k$-graph in $\mathbb{U}_{i-1}^{m}$, thus the proof is completed.
\end{proof}

By Lemmas~\ref{u4} and \ref{ui}, we have:

\begin{Lemma}\label{u5}
Let $H$ be a non-linear $k$-graph in $\mathbb{U}_i^{m}$, where $i\ge 4$ and $m\ge 8$. Then

$$\rho(H)<\rho (U_2(m-4,2)).$$
\end{Lemma}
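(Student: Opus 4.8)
The plan is to prove the statement by induction on the number $i$ of non-pendent vertices, using Lemma~\ref{u4} as the base case and Lemma~\ref{ui} as the descent tool. The induction hypothesis $P(i)$ will read: every non-linear $k$-graph $H\in\mathbb{U}_i^m$ with $m\ge 8$ satisfies $\rho(H)<\rho(U_2(m-4,2))$. Lemma~\ref{u4} is exactly $P(4)$, which anchors the induction.

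Before running the induction I would first record a uniform bound for \emph{linear} graphs, since this is the one point where the inductive hypothesis alone does not suffice. By \cite{FTPL15}, $G_1^k$ uniquely attains the largest spectral radius among all linear $k$-graphs in $\mathbb{U}^m$, so every linear $L\in\mathbb{U}^m$ satisfies $\rho(L)\le\rho(G_1^k)$. Because $U_2(m-4,2)$ and $G_1^k$ are the $k$th powers of $G(m-4,2)$ and $G_1$ respectively, Lemma~\ref{p} gives $\rho(U_2(m-4,2))=\rho(G(m-4,2))^{2/k}$ and $\rho(G_1^k)=\rho(G_1)^{2/k}$. Combining the strict inequality $\rho(G(m-4,2))>\rho(G_1)$ supplied by Lemma~\ref{graph} (valid for $m\ge 8$) with the monotonicity of $t\mapsto t^{2/k}$ on $t>0$ yields $\rho(L)\le\rho(G_1^k)<\rho(U_2(m-4,2))$ for every linear $L\in\mathbb{U}^m$.

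For the inductive step I would fix $i\ge 5$ and assume $P(i-1)$. Let $H\in\mathbb{U}_i^m$ be non-linear. Since $n=m(k-1)$ is fixed, $\mathbb{U}_{i-1}^m$ is finite and the maximum in Lemma~\ref{ui} is attained by some $F^\ast\in\mathbb{U}_{i-1}^m$ with $\rho(H)<\rho(F^\ast)$. If $F^\ast$ is linear, the bound of the previous paragraph gives $\rho(F^\ast)<\rho(U_2(m-4,2))$. If $F^\ast$ is non-linear, then $i-1\ge 4$, and either the base case $P(4)$ (when $i-1=4$) or the hypothesis $P(i-1)$ gives $\rho(F^\ast)<\rho(U_2(m-4,2))$. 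In both cases $\rho(H)<\rho(F^\ast)<\rho(U_2(m-4,2))$, which establishes $P(i)$ and completes the induction.

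The main obstacle is precisely the appearance of a possibly-linear maximizer. Lemma~\ref{ui} bounds $\rho(H)$ by a maximum taken over \emph{all} of $\mathbb{U}_{i-1}^m$, linear graphs included, whereas the inductive hypothesis only controls non-linear graphs; hence the preparatory paragraph reducing the linear case to the $G_1^k$ bound via Lemmas~\ref{p} and \ref{graph} is indispensable. Once $P(4)$ and this linear bound are in place, the descent is routine and no further computation is required.
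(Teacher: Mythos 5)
Your proof is correct and follows the paper's intended route: the paper justifies Lemma~\ref{u5} with the single line ``By Lemmas~\ref{u4} and \ref{ui}'', which is exactly your induction on $i$ with Lemma~\ref{u4} as base case and Lemma~\ref{ui} as the descent step. Where you go beyond the paper is the linear-maximizer issue, and you are right that it is a genuine one: Lemma~\ref{ui} bounds $\rho(H)$ by a maximum over \emph{all} of $\mathbb{U}_{i-1}^{m}$, whereas Lemma~\ref{u4} and the inductive hypothesis control only non-linear hypergraphs, and the edge-moving operations in the proof of Lemma~\ref{ui} can indeed turn a non-linear hypergraph into a linear one, so the maximizer need not be non-linear. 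The paper glosses over this at the point of Lemma~\ref{u5} and only invokes the bound $\rho(L)\le\rho(G_1^k)<\rho(U_2(m-4,2))$ for linear $L$ (via \cite[Corollary 3.7]{FTPL15} together with Lemmas~\ref{p} and \ref{graph}) later, in the proof of Theorem~\ref{unicyclic}$(ii)$. Your preparatory paragraph supplies precisely this missing ingredient, and your remark that the maximum in Lemma~\ref{ui} is attained (since $\mathbb{U}_{i-1}^{m}$ is finite up to isomorphism) is a harmless but welcome point of rigor; in short, your write-up is the same argument as the paper's, carried out more completely.
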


Now a main result of this paper follows.

\begin{Theorem}\label{unicyclic}
Let $H$ be a $k$-graph in $\mathbb{U}^{m}$ with $m\ge 8$. Then
\begin{eqnarray*}
 (i)~\rho(U_2(m-4,2))&\le&\rho(U_3^2(m-4,0;1))<\rho(U_3^1(m-3,0;1))~~~~~\qquad \qquad  \\
&<&\rho(U_2(m-3,1))<\rho(U_2(m-2,0)),
\end{eqnarray*}
equality holds only if $m=8$.

$(ii)$ If $H \notin \{U_2(m-2,0),U_2(m-3,1),U_2(m-4,2),U_3^1(m-3,0;1),U_3^2(m-4,0;1)\}$,
then
$$\rho(H)<\rho(U_2(m-4,2)).$$

\end{Theorem}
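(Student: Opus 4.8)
The plan is to organize the proof around the classification of $\mathbb{U}^m$ by the number of non-pendent vertices, namely the partition $\mathbb{U}^m = \bigcup_{i \ge 2} \mathbb{U}_i^m$ established just before the statement, and to combine the ordering results already proved at each level into a single global comparison. The strategy splits naturally into proving the chain of inequalities in part $(i)$ among the five distinguished hypergraphs, and then proving in part $(ii)$ that every remaining $k$-graph is strictly dominated by $\rho(U_2(m-4,2))$, the smallest of the five.

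First I would handle part $(i)$. The two leftmost inequalities, namely $\rho(U_2(m-4,2)) \le \rho(U_3^2(m-4,0;1)) < \rho(U_3^1(m-3,0;1))$, are exactly the last portion of Lemma~\ref{u3}, so they may be quoted directly, together with the fact that equality can occur only when $m=8$. For the remaining two inequalities $\rho(U_3^1(m-3,0;1)) < \rho(U_2(m-3,1)) < \rho(U_2(m-2,0))$, I would invoke the edge-moving machinery: the second of these follows immediately from Lemma~\ref{u2} applied with $a=m-3$, $b=1$. For the first, I would observe that $U_2(m-3,1)$ can be produced from $U_3^1(m-3,0;1)$ by moving the single pendent edge hanging off the pendent vertex $w$ (inside a cycle edge) onto the non-pendent vertex $u$; since $u$ already carries the larger Perron weight this is an application of Corollary~\ref{pm} (or Lemma~\ref{e0} directly), yielding the strict inequality.

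For part $(ii)$ I would argue by cases on the index $i$ of the class $\mathbb{U}_i^m$ containing $H$. If $H \in \mathbb{U}_2^m$, then $H \cong U_2(a,b)$ and Lemma~\ref{u2} shows $\rho(H) < \rho(U_2(m-2,0))$ unless $H$ is one of the three excluded members $U_2(m-2,0), U_2(m-3,1), U_2(m-4,2)$; the content here is that any $U_2(a,b)$ with $b \ge 3$ (equivalently $a \le m-5$) satisfies $\rho(U_2(a,b)) < \rho(U_2(m-4,2))$, which again follows by repeated edge-moving from Lemma~\ref{u2}. If $H$ is linear, it is a power of $G_1$ and Lemma~\ref{graph} (via Lemma~\ref{p}) places its spectral radius strictly below $\rho(U_2(m-4,2))$ for $m \ge 8$. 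If $H \in \mathbb{U}_3^m$ is non-linear and not one of the two excluded members, Lemma~\ref{u3} gives $\rho(H) < \rho(U_2(m-4,2))$ outright. Finally, the case $i \ge 4$ is dispatched by Lemma~\ref{u5}.

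The main obstacle is not any single inequality — each individual comparison is already packaged in an earlier lemma — but rather the bookkeeping needed to certify that the five distinguished hypergraphs are the only possible exceptions and that the excluded set is handled without double counting or gaps. In particular I must be careful in the $\mathbb{U}_2^m$ case to separate cleanly the three retained members from the dominated ones, and to confirm that the non-linear $\mathbb{U}_3^m$ members excluded in Lemma~\ref{u3} are precisely $U_3^1(m-3,0;1)$ and $U_3^2(m-4,0;1)$, so that part $(ii)$ covers every hypergraph outside the listed five. Once this case analysis is laid out, the theorem follows by assembling the cited lemmas, with the equality condition $m=8$ traced back to its single origin in Lemma~\ref{graph}.
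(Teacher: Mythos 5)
Your overall architecture matches the paper exactly (assemble part $(i)$ from Lemmas~\ref{u2} and \ref{u3} plus one edge-moving step, then prove part $(ii)$ by cases on the number of non-pendent vertices via Lemmas~\ref{u2}, \ref{u3}, \ref{u5} and the linear case), but two steps as you wrote them are wrong. First, in part $(i)$: moving the single pendent edge at $w$ \emph{to $u$} does not produce $U_2(m-3,1)$ — since $u$ already carries $m-3$ pendent edges in $U_3^1(m-3,0;1)$, that move produces $U_2(m-2,0)$, and the resulting inequality $\rho(U_3^1(m-3,0;1))<\rho(U_2(m-2,0))$ does not establish the third link $\rho(U_3^1(m-3,0;1))<\rho(U_2(m-3,1))$ in the chain. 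The correct move (the paper's) is Corollary~\ref{pm} with $u_1=v$ and $u_2=w$: moving $w$'s pendent edge to $v$ yields $U_2(m-3,1)$. Note also that your appeal to ``$u$ already carries the larger Perron weight'' is an unverified hypothesis of Lemma~\ref{e0}; Corollary~\ref{pm} is used in the paper precisely because it requires no comparison of Perron components.

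Second, in part $(ii)$ your treatment of the linear case is based on a false claim: a linear unicyclic $k$-graph is \emph{not} in general a power of $G_1$ (it need not be a power hypergraph at all — e.g.\ a linear hypergraph whose cycle has length $\ge 3$ with pendent edges scattered over several vertices). What you need is the extremal result of Fan, Tan, Peng and Liu (\cite[Corollary~3.7]{FTPL15}, quoted in Section~3 of the paper): $G_1^k$ uniquely maximizes the spectral radius over all \emph{linear} $k$-graphs in $\mathbb{U}^m$, so $\rho(H)\le\rho(G_1^k)$, and only then do Lemmas~\ref{p} and \ref{graph} give $\rho(G_1^k)<\rho(U_2(m-4,2))$ for $m\ge 8$. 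Without that citation the linear case is simply unproved. The remaining bookkeeping in your proposal (the $\mathbb{U}_2^m$ case with $b\ge 3$ handled by iterating Lemma~\ref{u2} down to $\rho(U_2(m-5,3))<\rho(U_2(m-4,2))$, the non-linear $\mathbb{U}_3^m$ case via the exact exclusion set of Lemma~\ref{u3}, and $i\ge 4$ via Lemma~\ref{u5}) is correct and coincides with the paper's argument.
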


\begin{proof}
We first prove the relationship in $(i)$.

The first two inequalities result directly from Lemma~\ref{u3} and the fourth inequality follows from Lemma~\ref{u2}.

For $U_3^1(m-3,0;1)$, by Corollary~\ref{pm}, take $u_1=v$ and $u_2=w$, we obtain the third inequality that $\rho(U_3^1(m-3,0;1))<\rho(U_2(m-3,1))$.

If $H$ is non-linear, the inequality of $(ii)$ can be obtained from Lemmas~\ref{u2}, \ref{u3} and \ref{u5} by specifying the number of non-pendent vertices in $H$. If $H$ is linear, then according to \cite[Corollary 3.7]{FTPL15}, Lemmas~\ref{p} and \ref{graph} we have
$$ \rho(H)\le \rho(G_1^{k})<\rho(G^k(m-4,2))=\rho(U_2(m-4,2)).
$$

The proof is completed. \end{proof}

\section{The first three bicyclic $k$-graphs with larger spectral radii in $\mathbb{B}^{m}$}

Denote by $\mathbb{B}_i^{m}$ the set of hypergraphs in $\mathbb{B}^{m}$ with exactly $i$ non-pendent vertices where $i\ge 2$.
Let $H$ be a $k$-graph in $\mathbb{B}_i^{m}$.

We first consider that $H$ is in $\mathbb{B}_2^{m}$. Let $u,v$ be the non-pendent vertices in $H$. Since $H$ is bicyclic, $u,v$ have at least three common edges, otherwise $H$ is acyclic or unicyclic. By Proposition~\ref{prop} $(iv)$, there are exactly three edges sharing $u,v$. As the remaining edges of $H$ (if there exists any) are pendent edges, $H\cong B_2(a,b)$ for some integers $a,b$. Thus $k$-graphs in $\mathbb{B}_2^{m}$ are in the form of $B_2(a,b)$ with $a,b \in \mathbb{N}$.

\begin{Lemma}\label{b2}
Let $a\ge b \ge 1$ and $a+b=m-3$. Then
$$ \rho(B_2(a,b))< \rho(B_2(a+1,b-1))\le \rho(B_2(m-3,0)).$$
\end{Lemma}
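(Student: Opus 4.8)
The plan is to mirror the proof of Lemma~\ref{u2} essentially verbatim, replacing the double edge $G(a,b)$ by the triple edge $M(a,b)$. Since the edge-moving operations touch only the pendent edges attached at $u$ and $v$ and leave the three non-pendent (cycle) edges of $B_2(a,b)$ untouched, the entire argument transfers with no essential change.

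First I would record the two relevant moves. Starting from $B_2(a,b)$, moving a single pendent edge from $v$ to $u$ produces $B_2(a+1,b-1)$; this is legal because $b\ge 1$ guarantees a pendent edge at $v$, and the relocated edge becomes a fresh pendent edge at $u$, so no multiple edge is created and the result is again a simple hypergraph of the form $B_2(\cdot,\cdot)$. Symmetrically, moving $a-b+1$ pendent edges from $u$ to $v$ (legal since $a\ge b\ge 1$ forces $1\le a-b+1\le a$) produces $B_2(b-1,a+1)$. Because $M(p,q)$ is invariant under swapping the roles of $u$ and $v$, taking $k$th powers gives $B_2(b-1,a+1)\cong B_2(a+1,b-1)$.

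Next I would invoke Lemma~\ref{e} with $r=2$ and the two chosen vertices $v_1=u$, $v_2=v$. The hypergraph $H_1$ obtained by moving an edge toward $u$ and the hypergraph $H_2$ obtained by moving edges toward $v$ are, by the previous paragraph, both isomorphic to $B_2(a+1,b-1)$. Hence $\max\{\rho(H_1),\rho(H_2)\}=\rho(B_2(a+1,b-1))$, and Lemma~\ref{e} yields the strict inequality $\rho(B_2(a,b))<\rho(B_2(a+1,b-1))$.

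Finally, the bound $\rho(B_2(a+1,b-1))\le\rho(B_2(m-3,0))$ follows by iterating the strict inequality. Since $a+b=m-3$, every such move preserves the index sum at $m-3$; applying the base step $b-1$ further times carries $(a+1,b-1)$ through $(a+2,b-2),\dots$ up to $(m-3,0)$, each step strictly increasing the spectral radius, so equality with $\rho(B_2(m-3,0))$ occurs exactly when $b=1$. There is no genuine obstacle beyond the bookkeeping; the only point requiring a word of care is the symmetry identification $B_2(b-1,a+1)\cong B_2(a+1,b-1)$, which is precisely what lets the two-sided move of Lemma~\ref{e} collapse to a single target hypergraph and thereby force the strict inequality in the stated direction.
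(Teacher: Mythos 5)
Your proof is correct and takes essentially the same route as the paper: both obtain $B_2(a+1,b-1)$ from $B_2(a,b)$ via the two symmetric pendent-edge moves, apply Lemma~\ref{e} (with the two targets isomorphic, so the maximum collapses) to get the strict inequality, and then iterate to reach $\rho(B_2(m-3,0))$. The details you spell out---legality of the moves, simplicity of the resulting hypergraphs, and the identification $B_2(b-1,a+1)\cong B_2(a+1,b-1)$---are left implicit in the paper but are exactly the right justifications.
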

\begin{proof}
Note that $B_2(a+1,b-1)$ can be obtained from $B_2(a,b)$
 by moving one pendent edge from $v$ to $u$, or by moving $a-b+1$ pendent edges from $u$ to $v$. By Lemma~\ref{e},  $\rho(B_2(a,b))<\rho(B_2(a+1,b-1))$. Then by induction, $\rho(B_2(a+1,b-1))\le \rho(B_2(m-3,0))$ with equality if and only if $b=1$.
\end{proof}

Now we investigate $H\in \mathbb{B}_3^{m}$. Let $u,v,w$ be the three non-pendent vertices of $H$. We may discuss by the number of common edges $u,v,w$ have. By Proposition~\ref{prop} $(iii)$, $u,v,w$ share at most two common edges.

If $u,v,w$ have exactly two common edges, then any two of them can not share anther edge, otherwise there is a $3$-cyclic subgraph induced by three non-pendent edges in $H$, which contradicts Lemma~\ref{cyclic}. Hence the remaining edges are pendent edges attached at $u,v$ or $w$. Thus $H\cong B_3^1(a,b,c)$ for some integers $a,b$ and $c$.

If $u,v,w$ have only one common edge say $e_1$, then there are at least two more edges that each contains two non-pendent vertices. Otherwise $H$ is acyclic or unicyclic. If there are three more non-pendent edges other than $e_1$, then $H$ has a $3$-cyclic subgraph formed by four non-pendent edges, which contradicts Lemma~\ref{cyclic}. Hence $H$ has exactly two more non-pendent edges, say $e_2,e_3$. If $e_2, e_3$ intersect at two vertices say $u,v$, then $H\cong B_3^2(a,b,c)$. Otherwise $e_2$ and $e_3$ have only one common vertex say $v$, then $H\cong B_3^3(a,b,c)$.

Suppose that $u,v,w$ do not have common edge.
Since $H$ is connected, there is a path connecting $u,v,w$, say $ve_1ue_2w$.
As $H$ is bicyclic, there are exactly two more non-pendent edges, say $e_3$ and $e_4$, that each contains two of $u,v,w$. Otherwise $H$ is acyclic, unicyclic or has a $3$-cyclic subgraph formed by five non-pendent edges. Note that $e_3\cap e_4< 3$. If $e_3\cap e_4$ is $\{u,v\}$ or $\{u,w\}$, then $H\cong B_3^4(a,b,c)$ for some $a,b,c$. If  $e_3\cap e_4=\{u\}$, then $H\cong B_3^5(a,b,c)$. Otherwise $e_3\cap e_4$ is $\{v,w\}$, $\{v\}$ or $\{w\}$, then $H\cong B_3^6(a,b,c)$ for some $a,b,c$.


Therefore, $k$-graphs in $\mathbb{B}_3^{m}$ have six forms $B_3^j(a,b,c)$, $j=1,\cdots,6$, where $a,b,c\in \mathbb{N}$ and $c\ge 1$ for $j=2,4$.


\begin{Lemma}\label{b3}
Let $H$ be a $k$-graph in $\mathbb{B}_3^{m}\backslash \{B_3^1(m-2,0,0)\}$. If $m\ge 5$, then
$$ \rho(H) < \rho(B_2(m-4,1)) < \rho(B_3^1(m-2,0,0)). $$
\end{Lemma}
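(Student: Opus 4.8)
The plan is to establish the right-hand inequality directly and to reduce the left-hand inequality to the benchmark estimates already available in Lemma~\ref{bgraph}$(ii)$. For the right inequality there is nothing to do beyond chaining: Lemma~\ref{bgraph}$(i)$ gives $\rho(B_3^1(m-2,0,0))=\rho(B_2(m-3,0))$, while Lemma~\ref{b2} gives $\rho(B_2(m-4,1))<\rho(B_2(m-3,0))$, so $\rho(B_2(m-4,1))<\rho(B_3^1(m-2,0,0))$.

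For the left inequality I would run through the six forms $B_3^j(a,b,c)$ that exhaust $\mathbb{B}_3^m$ and push each, by the edge operations of Corollary~\ref{pm}, Lemma~\ref{e} and Lemma~\ref{ne}, up to one of the two benchmarks $B_3^1(m-3,1,0)$ or $B_3^3(0,m-3,0)$, both of which satisfy $\rho<\rho(B_2(m-4,1))$ by Lemma~\ref{bgraph}$(ii)$. For $B_3^1(a,b,c)$, using the symmetry of $u,v,w$ to assume $a\ge b\ge c$ (so $b\ge1$ once $H\neq B_3^1(m-2,0,0)$), I move the $c$ pendent edges off $w$ onto $v$ by Corollary~\ref{pm} and then apply the internal ordering of the family $B_3^1(\cdot,\cdot,0)$ proved exactly as in Lemma~\ref{b2} (the graph $B_3^1(a'+1,b'-1,0)$ arises from $B_3^1(a',b',0)$ both by moving one pendent edge from $v$ to $u$ and by moving $a'-b'+1$ of them from $u$ to $v$, so Lemma~\ref{e} forces a strict increase), obtaining $\rho(B_3^1(a,b,c))\le\rho(B_3^1(m-3,1,0))$. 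For $B_3^3(a,b,c)$, whose vertex $v$ lies in all three non-pendent edges, two applications of Corollary~\ref{pm} move the pendent edges off $u$ and $w$ onto $v$, giving $\rho(B_3^3(a,b,c))\le\rho(B_3^3(0,m-3,0))$. The off-cycle forms $B_3^4,B_3^5,B_3^6$ (built on $U_3^2$) are folded back as in the unicyclic Lemma~\ref{u3}: Lemma~\ref{ne} transfers the branch carrying $w$ from outside the cycle onto a pendent vertex of a cycle edge, converting each of them (possibly after a preliminary consolidation by Corollary~\ref{pm}) into one of the clustered forms $B_3^1,B_3^2,B_3^3$ already treated, with a positive pendent parameter appearing at a cycle vertex so that the degenerate configuration below is never produced.

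The remaining form is $B_3^2(a,b,c)$ with $c\ge1$, and here lies the main obstacle. When $a\ge1$ I move the $c$ edges off $w$ onto $v$ to reach $B_2(a,b+c)$, and when $a=0<b$ I move them onto $u$ to reach $B_2(c,b)$; in either case both coordinates are positive, so Lemma~\ref{b2} gives $\rho<\rho(B_2(m-4,1))$. This leaves precisely the degenerate graph $B_3^2(0,0,m-3)$, in which $u,v$ share all three non-pendent edges and $w$ carries every pendent edge. It is the genuinely hard case: it is neither a power hypergraph nor one of the benchmarks of Lemma~\ref{bgraph}, every edge move off $w$ overshoots to the too-large $B_2(m-3,0)$, and the tempting conversion to $B_3^3(0,0,m-3)$ (redirecting the third $u$--$v$ edge onto $w$) is illegitimate, since a direct look at the Perron vector shows $x_w<x_u$ for small $m$ and hence Lemma~\ref{e0} cannot be used in that direction. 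I therefore expect to close it by a fresh weighted incidence matrix argument in the exact style of the Claims in Lemma~\ref{bgraph}: put $\beta=\rho(M(m-4,1))^{-2}$, weight pendent vertices by $1$ and non-pendent vertices of pendent edges by $\beta$, solve the linear relations on the triple edge so that equality holds in Definition~\ref{alpha}$(a)$--$(b)$ on every edge but one, and verify that the product over the exceptional edge strictly exceeds $\beta$. Then $B_3^2(0,0,m-3)$ is strictly and consistently $\beta$-subnormal, and Lemma~\ref{a}$(ii)$ yields $\rho(B_3^2(0,0,m-3))<\beta^{-1/k}=\rho(B_2(m-4,1))$.
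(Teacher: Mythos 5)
Your proposal is correct, and outside one subcase it retraces the paper's own proof: the right-hand inequality via Lemma~\ref{bgraph}$(i)$ plus Lemma~\ref{b2}, the reduction of $B_3^1(a,b,c)$ and $B_3^3(a,b,c)$ by Corollary~\ref{pm} and Lemma~\ref{e} to the benchmarks $B_3^1(m-3,1,0)$ and $B_3^3(0,m-3,0)$ of Lemma~\ref{bgraph}$(ii)$, the folding of $B_3^4,B_3^5,B_3^6$ into $B_3^2,B_3^3$ by Lemma~\ref{ne}, and the collapse of generic $B_3^2(a,b,c)$ onto $B_2(\cdot,\cdot)$. The genuine divergence is the degenerate graph $B_3^2(0,0,m-3)$. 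The paper disposes of it with no new computation by applying Lemma~\ref{e} at the pair $\{u,w\}$: moving one pendent edge from $w$ to $u$ yields $B_3^2(1,0,m-4)$, while moving one of the two $u,v$-edges not containing $w$ from $u$ to $w$ yields $B_3^3(0,0,m-3)\cong B_3^3(m-3,0,0)$, so $\rho(H)<\max\{\rho(B_3^2(1,0,m-4)),\rho(B_3^3(0,0,m-3))\}$, and both targets are already below $\rho(B_2(m-4,1))$ by the cases you yourself treated. Your dismissal of the conversion to $B_3^3(0,0,m-3)$ as ``illegitimate'' is therefore off the mark: Lemma~\ref{e0} indeed cannot be invoked one-sidedly without knowing the Perron ordering of $x_u,x_w$, but Lemma~\ref{e} --- the very tool you use for the internal ordering of $B_3^1(\cdot,\cdot,0)$ --- needs no such information, since it moves edges in both directions and takes a maximum. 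Your substitute, a fourth $\beta$-subnormality computation, is workable but you left its crux as an expectation; for the record it does close: weight pendent vertices by $1$ and non-pendent vertices of pendent edges by $\beta$, put $B(u,e)=B(v,e)=\sqrt{\beta}$ on the two $u,v$-edges avoiding $w$, and $B(u,e_2)=B(v,e_2)=1-2\sqrt{\beta}$, $B(w,e_2)=1-(m-3)\beta$ on the edge through $w$. Consistency is automatic from the $u$--$v$ symmetry (every cycle runs through pairs of the three $u,v$-edges), equality holds in Definition~\ref{alpha}$(a)$--$(b)$ everywhere except on $e_2$, and since $\beta^{-1}=\rho(M(m-4,1))^2>m+5$ gives $1-(m-3)\beta>8\beta$, while $\beta<1/10$ for $m\ge 5$ gives $(1-2\sqrt{\beta})^2>1/8$, the product on $e_2$ is $(1-2\sqrt{\beta})^2\bigl(1-(m-3)\beta\bigr)>\beta$, so Lemma~\ref{a}$(ii)$ applies. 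Net comparison: the paper's Lemma~\ref{e} trick buys a computation-free reduction to already-settled cases, whereas your route is self-contained at the cost of one more weighted-incidence-matrix verification, which must actually be carried out as above for the proof to be complete.
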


\begin{proof}
To prove the first inequality, we discuss by the formation of $H$.

{\bf Case 1.} $H\cong B_3^1(a,b,c)$.
Since $H\ncong B_3^1(m-2,0,0)$,  at least two of $a,b,c$ are positive, say $a,b$.

If $c=0$, then by moving $b-1$ pendent edges from $v$ to $u$, or by moving $a-1$ pendent edges from $u$ to $v$, we obtain $B_3^1(m-3,1,0)$.
Thus by Lemmas~\ref{e} and \ref{bgraph}, $$ \rho(H) \le\rho(B_3^1(m-3,1,0)) < \rho(B_2(m-4,1)). $$

If $c\ge 1$, then by Corollary~\ref{pm}, take $u_1=v$ and $u_2=w$, we have that
$$ \rho(H) < \rho(B_3^1(a,b+c,0))\le\rho(B_3^1(m-3,1,0)) < \rho(B_2(m-4,1)). $$

{\bf Case 2.} $H\cong B_3^3(a,b,c)$.

Suppose that $a\ge c$ within this case. If $a=c=0$, then by Lemma~\ref{bgraph}, $\rho(H) =\rho(B_3^3(0,m-3,0)) < \rho(B_2(m-4,1))$. If $a\ge 1$, then by Corollary~\ref{pm},
$$\rho(H)<\rho(B_3^3(0,a+b,c))\le \rho(B_3^3(0,a+b+c,0))< \rho(B_2(m-4,1)). $$

{\bf Case 3.} $H\cong B_3^2(a,b,c)$ with $c\ge 1$.

Suppose that $a\ge b$ within this case. If $a\ge 1$, then by Corollary~\ref{pm}, take $u_1=v$ and $u_2=w$, we have for $b+c\ge 1$ that
$$ \rho(H) < \rho(B_2(a,b+c)) \le \rho(B_2(m-4,1)). $$

Suppose that $a=b=0$ (See Figure~\ref{b}). By removing one pendent edge from $w$ to $u$, we obtain $B_3^2(1,0,m-4)$. Besides, by removing a non-pendent edge not containing $w$ from $u$ to $w$, we obtain $B_3^3(0,0,m-3)$ from $H$.  Then by Lemma~\ref{e} and the discussion in Case $2,3$, $$\rho(H) <\max\{\rho(B_3^2(1,0,m-4)),\rho(B_3^3(0,0 ,m-3))\} < \rho(B_2(m-4,1)).$$

{\bf Case 4.} $H\cong B_3^j(a,b,c)$, $j=4,5,6$.

If $H\cong B_3^4(a,b,c)$ with $c\ge 1$, then by moving $c$ pendent edges from $w$ to an arbitrary pendent vertex in an edge containing $u,v$, we obtain $B_3^2(a+1,b,c)$. By Lemma~\ref{ne} and the discussion in Case 3, we have for $c\ge 1$ that
$$\rho(H)<\rho(B_3^2(a+1,b,c))<\rho(B_2(m-4,1)).$$

If $H\cong B_3^5(a,b,c)$ , then by moving $c$ pendent edges and one edge containing $u,w$ from $w$ to an arbitrary pendent vertex in an edge containing $u,v$, we obtain $B_3^3(b, a+1,c)$. By Lemma~\ref{ne} and the discussion in Case 2,
$$\rho(H)<\rho(B_3^3( b,a+1,c))<\rho(B_2(m-4,1)).$$

If $H\cong  B_3^6(a,b,c) $, then by moving $c$ pendent edges and the edge containing $v,w$ from $w$ to an arbitrary pendent vertex in an edge containing $u,v$, we obtain $B_3^3( a+1,b,c)$. By Lemma~\ref{ne} and the discussion in Case 2,
$$\rho(H)<\rho(B_3^3(a+1,b,c))<\rho(B_2(m-4,1)).$$

The second inequality of this lemma follows from Lemmas~\ref{bgraph} and \ref{b2}.
\end{proof}

\begin{Lemma}\label{bi}
Let $i\ge 4$ and $H$ be a $k$-graph in $\mathbb{B}_i^{m}$. Then
$$\rho(H)< \max\{\rho(F): \text{$F$ is a $k$-graph in~} \mathbb{B}_{i-1}^{m}\}.$$
\end{Lemma}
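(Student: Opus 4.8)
The plan is to imitate the reduction carried out for the unicyclic case in Lemma~\ref{ui}: starting from $H\in\mathbb{B}_i^m$ I would locate two non-pendent vertices and apply one of the edge-moving operations (Lemma~\ref{e} or Corollary~\ref{pm}) so as to turn one non-pendent vertex into a pendent one while strictly increasing the spectral radius. The crucial general observation is that moving edges in the sense of Definition~\ref{em} leaves both $n=|V|$ and $m=|E|$ unchanged and keeps the graph connected, since in every move below the vertex that edges are taken away from always retains at least one incident edge and so never becomes isolated. Hence $r=m(k-1)-n+1=2$ is preserved and every graph produced by such a move automatically lies in $\mathbb{B}^m$. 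Consequently I only have to control two things for each move: that the resulting graph is simple, and that it has exactly $i-1$ non-pendent vertices, i.e. that it lands in $\mathbb{B}_{i-1}^m$; the strict increase of $\rho$ is then supplied directly by Lemma~\ref{e} or Corollary~\ref{pm}.

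I would split on whether the non-pendent vertices are pairwise adjacent. \textbf{Case A: some two non-pendent vertices $u,v$ lie in no common edge.} Here I take a shortest path $P=ue_1\cdots e_sv$ with $s\ge2$, form $H_1$ by moving every edge incident with $u$ except $e_1$ from $u$ to $v$ (legitimate since $v$ lies in none of $u$'s incident edges), and form $H_2$ symmetrically from $v$. In $H_1$ the vertex $u$ survives only in $e_1$ and becomes pendent while no other vertex changes status, so $H_1\in\mathbb{B}_{i-1}^m$, and likewise $H_2$; Lemma~\ref{e} then yields $\rho(H)<\max\{\rho(H_1),\rho(H_2)\}$, which is the desired bound.

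\textbf{Case B: every pair of non-pendent vertices shares a common edge.} If all non-pendent vertices lie in a single edge $f$, I argue exactly as in Lemma~\ref{ui}: since $i\ge4$ exceeds the number of non-pendent vertices carried by the bicyclic \emph{core} (the subgraph induced by the non-pendent edges, which is $2$-cyclic by Lemma~\ref{cyclic}), there is an extra non-pendent vertex $w\in f$ all of whose incident edges other than $f$ are pendent, and absorbing these into another non-pendent vertex $t\in f$ via Corollary~\ref{pm} (with $u_1=t,\ u_2=w$) makes $w$ pendent and raises $\rho$. What genuinely differs from the unicyclic setting is that the two cases above are \emph{not} exhaustive: a bicyclic graph can have four non-pendent vertices $u,v,w,x$ that pairwise share edges yet do not all lie in one edge, e.g. $e_1\supset\{u,v,w\}$ together with $e_2\supset\{u,x\}$, $e_3\supset\{v,x\}$, $e_4\supset\{w,x\}$. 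In such a configuration I would pick a pair of non-pendent vertices sharing \emph{exactly one} common edge ($x$ and $u$ in the example) and apply Corollary~\ref{pm} to move all of one vertex's other incident edges onto its partner; sharing only one edge guarantees that the moved-from vertex is left pendent, so the count drops to $i-1$.

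\textbf{Main obstacle.} The delicate part is the combinatorial bookkeeping inside Case B: I must show that whenever every pair of non-pendent vertices is adjacent but they do not all lie in one edge, there is always a pair sharing exactly one common edge whose merge sheds precisely one non-pendent vertex, and I must verify that none of the moves creates a repeated edge. For the latter, if moving an edge onto a vertex would duplicate an existing edge one instead performs the symmetric move, so that at least one admissible simple target always exists. Establishing the existence of the reducible pair is where Lemma~\ref{cyclic} and Proposition~\ref{prop} do the real work, since the $2$-cyclic bound together with the restrictions in Lemma~\ref{1cyclic} severely limits how densely the non-pendent vertices can be interconnected. By contrast, preservation of bicyclicity is free from the invariance of $n$, $m$ and connectivity, and the monotonicity of $\rho$ is delivered by Lemma~\ref{e} and Corollary~\ref{pm}, so the whole argument reduces to this structural case check.
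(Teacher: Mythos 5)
Your skeleton matches the paper's in outline --- shortest path plus Lemma~\ref{e} when two non-pendent vertices share no edge, a Corollary~\ref{pm} merge otherwise --- but two of your steps do not hold up. First, in the subcase where all non-pendent vertices lie in one edge $f$, your transplanted Lemma~\ref{ui} argument (that some non-pendent $w\in f$ has only pendent incident edges besides $f$) is \emph{false} for bicyclic graphs. For $k\ge 4$ take $f$ containing non-pendent vertices $v_1,v_2,v_3,v_4$, an edge $g_1$ with $g_1\cap f=\{v_1,v_2\}$, and an edge $g_2$ with $g_2\cap f=\{v_3,v_4\}$, plus pendent edges to make up $m$: this graph is connected with $n=3k-4$ vertices on the three non-pendent edges, hence exactly $2$-cyclic, all four non-pendent vertices lie in $f$, yet each lies in two non-pendent edges, so your $w$ does not exist. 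The unicyclic reasoning breaks precisely because a bicyclic graph has the cyclicity budget for two extra $2$-cycles through $f$, whereas a unicyclic one does not. The paper proves instead a weaker, sufficient statement: some pair $v_1,v_2\in f$ shares no edge other than $f$ (otherwise the pairwise second common edges of three of the vertices induce a $3$-cyclic subgraph, contradicting Lemma~\ref{cyclic}), and then \emph{all} of $v_2$'s other edges --- pendent or not --- are moved onto $v_1$ via Corollary~\ref{pm}; in the counterexample above, $v_1,v_3$ is such a pair.

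Second, your observation that the dichotomy ``all non-pendent vertices in one edge / some pair non-adjacent'' is not exhaustive is correct and genuinely sharp: the paper's own proof treats exactly these two cases and silently skips the intermediate one, and your configuration $e_1\supseteq\{u,v,w\}$, $e_2\supseteq\{u,x\}$, $e_3\supseteq\{v,x\}$, $e_4\supseteq\{w,x\}$ is indeed exactly $2$-cyclic (it has $4(k-1)-(4k-5)+1=2$), so that case is non-vacuous. But your handling of it is a placeholder, not a proof: the existence of a pair of non-pendent vertices sharing \emph{exactly one} common edge is the entire content of the case, and you defer it to ``Lemma~\ref{cyclic} and Proposition~\ref{prop} do the real work'' without carrying out the counting. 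It is provable --- if every pair shared at least two common edges, a subgraph on at most four non-pendent edges would already be at least $3$-cyclic --- but as written it is a gap, and it sits exactly at the step you yourself flag as the main obstacle. Likewise, ``perform the symmetric move if a duplicate edge arises'' is not an argument that an admissible simple target exists, since both moves could a priori create a multiple edge and Lemma~\ref{e0} requires that none does. So: same approach as the paper in the non-adjacent case, a step that fails in the one-edge case where the paper's pair-with-no-second-common-edge argument is needed, and the third case you rightly identified left unproven at its crux.
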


\begin{proof}
If all non-pendent vertices in $H$ are in one edge say $f$, then we can find two non-pendent vertices $v_1,v_2$ that do not have other common edge. Otherwise every two non-pendent vertices shares exactly two common edges, then $H$ contains a $3$-cyclic subgraph which is a $k$-graph obtained from $B_3^3(0,0,0)$ by adding an edge containing $u,w$ and $k-2$ new pendent vertices, a contradiction.
Denote by $H'$ the $k$-graph obtained from $H$ by moving all edges incident with $v_2$ except $f$ from $v_2$ to $v_1$. Note that $H'\in \mathbb{B}_{i-1}^{m}$. Now by Corollary~\ref{pm} we have $\rho(H)<\rho(H').$

Suppose there exists two non-pendent vertices $v_1,v_2$ in $H$ that do not have common edge. Let $P=v_1e_1\cdots e_sv_2$ be a shortest path connecting $v_1$ and $v_2$ where $s\ge 2$. Let $H_1$ be the $k$-graph obtained from $H$ by moving all edges incident with $v_1$ except $e_1$ from $v_1$ to $v_2$.  Let $H_2$ be the $k$-graph obtained from $H$ by moving all edges incident with $v_2$ except $e_s$ from $v_2$ to $v_1$. Then $H_1,H_2$ are in $\mathbb{B}_{i-1}^{m}$ and by Lemma~\ref{e} $\rho(H)<\max\{\rho(H_1),\rho(H_2)\}.$

Therefore, $\rho(H)$ is bounded up by the maximum spectral radius among $k$-graphs in $ \mathbb{B}_{i-1}^{m}$ when $i\ge 4$.
\end{proof}

Finally we consider that $H$ is in $\mathbb{B}_4^{m}$.

\begin{Lemma}\label{b4}
Let $H$ be a $k$-graph in $\mathbb{B}_4^{m}$ with $m\ge 5$. Then $$\rho(H)<\rho (B_2(m-4,1)).$$
\end{Lemma}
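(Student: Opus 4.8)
The statement asserts that every $k$-graph $H$ in $\mathbb{B}_4^m$ satisfies $\rho(H) < \rho(B_2(m-4,1))$. The natural strategy, mirroring exactly the proof of Lemma~\ref{u4} for the unicyclic case, is a reduction via Lemma~\ref{bi}: since $H\in\mathbb{B}_4^m$ with $i=4\ge 4$, Lemma~\ref{bi} gives $\rho(H) < \max\{\rho(F): F\in\mathbb{B}_3^m\}$. So the entire task reduces to bounding the maximum spectral radius over $\mathbb{B}_3^m$.

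**The plan.**

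The plan is to invoke Lemma~\ref{bi} first to pass from four non-pendent vertices down to three, and then to invoke Lemma~\ref{b3} to control the resulting $\mathbb{B}_3^m$ graph. Concretely, by Lemma~\ref{bi} there exists some $F\in\mathbb{B}_3^m$ with $\rho(H)<\rho(F)$. Now either $F\cong B_3^1(m-2,0,0)$ or $F\in\mathbb{B}_3^m\setminus\{B_3^1(m-2,0,0)\}$. In the latter case, Lemma~\ref{b3} immediately yields $\rho(F)<\rho(B_2(m-4,1))$, finishing the argument. The case $F\cong B_3^1(m-2,0,0)$ appears problematic at first glance, since Lemma~\ref{b3} places $B_3^1(m-2,0,0)$ \emph{above} $B_2(m-4,1)$; this is the step I expect to be the main obstacle.

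**Resolving the obstacle.**

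The resolution is that $F\cong B_3^1(m-2,0,0)$ cannot actually arise here, because Lemma~\ref{bi} is a strict-inequality statement and, more importantly, one must check which $\mathbb{B}_3^m$ graph the reduction produces. The edge-moving operations in Lemma~\ref{bi} (Corollary~\ref{pm} or Lemma~\ref{e}) preserve the total edge count and the bicyclic structure but redistribute pendent edges among non-pendent vertices; starting from a genuine four-non-pendent-vertex graph and merging two non-pendent vertices into one, the image $F$ inherits a specific combinatorial type. I would argue that the merge cannot collapse $H$ into the extremal graph $B_3^1(m-2,0,0)$, whose two non-pendent edges meet in three vertices, unless $H$ already had a degenerate structure incompatible with having four non-pendent vertices. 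Thus every reachable $F$ lies in $\mathbb{B}_3^m\setminus\{B_3^1(m-2,0,0)\}$, and Lemma~\ref{b3} applies. The cleanest way to present this is to note that the edge-moving reductions of Lemma~\ref{bi} can always be chosen to land in a non-extremal $\mathbb{B}_3^m$ graph, so one obtains $\rho(H) < \max\{\rho(F): F\in\mathbb{B}_3^m\setminus\{B_3^1(m-2,0,0)\}\} < \rho(B_2(m-4,1))$, where the final inequality is Lemma~\ref{b3}. The key technical point to verify carefully is precisely this claim that the reduction never produces $B_3^1(m-2,0,0)$ from a $\mathbb{B}_4^m$ graph.
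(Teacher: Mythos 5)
Your reduction has a genuine gap, and it sits exactly where you flagged it: the claim that the merge of Lemma~\ref{bi} never produces $B_3^1(m-2,0,0)$ from a graph in $\mathbb{B}_4^m$ is false. Consider $H=B_4$, defined just before Lemma~\ref{bgraph}: two edges $e_1,e_2$ meeting in the three vertices $u,v,w$, with $m-2$ pendent edges attached at a pendent vertex $t\in e_2$. This is a legitimate member of $\mathbb{B}_4^m$ (its non-pendent vertices are $u,v,w,t$), all four non-pendent vertices lie in $e_2$, and the only pairs of non-pendent vertices sharing no edge other than $e_2$ are the pairs involving $t$. Every admissible move in the Lemma~\ref{bi}/Corollary~\ref{pm} scheme --- moving the $m-2$ pendent edges from $t$ to one of $u,v,w$, or moving $e_1$ from one of $u,v,w$ to $t$ --- yields a graph isomorphic to $B_3^1(m-2,0,0)$. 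Since Lemma~\ref{bgraph}~$(i)$ together with Lemma~\ref{b2} gives $\rho(B_3^1(m-2,0,0))=\rho(B_2(m-3,0))>\rho(B_2(m-4,1))$, the bound $\rho(B_4)<\rho(B_3^1(m-2,0,0))$ obtained from the reduction is strictly weaker than what the lemma asserts, and your chain of inequalities breaks. No clever choice of which edges to move repairs this; a direct spectral estimate for $B_4$ is unavoidable.

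This is precisely why the paper does not simply cite Lemma~\ref{bi} here. Its proof splits on the number of non-pendent edges. When $H$ has exactly two non-pendent edges, it writes $H$ as $B_3^1(a,b,c)$ with $d\ge 1$ pendent edges attached at a pendent vertex of a non-pendent edge; for $a\ge 1$ the move lands on $B_3^1(a,b+d,c)$, which has two positive parameters and hence is not $B_3^1(m-2,0,0)$, so Lemma~\ref{b3} applies, while for $a=b=c=0$ one has $H\cong B_4$ and the inequality $\rho(B_4)<\rho(B_2(m-4,1))$ is proved independently in Lemma~\ref{bgraph} (Claim~4) by constructing a strictly and consistently $\beta$-subnormal weighted incidence matrix. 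When $H$ has at least three non-pendent edges, the paper re-runs the two merges of Lemma~\ref{bi} and verifies explicitly that neither image is $B_3^1(m-2,0,0)$: in Subcase~2.1 by observing the merge preserves the number of non-pendent edges (so the image has at least three, while $B_3^1(m-2,0,0)$ has two), and in Subcase~2.2 by a path-length argument when $v_1$ or $v_2$ carries a pendent edge and a non-pendent-edge count otherwise. Your proposal leaves this verification as an unproven assertion; in that regime it is in fact true, but the fatal flaw is $B_4$. To repair your argument you would need both to isolate the two-non-pendent-edge case and handle $B_4$ by a separate comparison such as the $\beta$-subnormality computation, and to supply the explicit checks that the merged graphs avoid $B_3^1(m-2,0,0)$ in the remaining cases.
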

\begin{proof}
{\bf Case 1.}
$H$ has exactly two non-pendent edges, say $e,f$.

Then $|e\cap f|=3$, otherwise $H$ is acyclic, unicyclic or has a $3$-cyclic subgraph. Hence $H$ can be obtained from $B_3^1(a,b,c)$ by attaching $d$ pendent edges at an arbitrary pendent vertex $t$ in a non-pendent edge, where $d\ge 1$.

Suppose that $a\ge b\ge c$. If $a\ge 1$, then by Corollary~\ref{pm} and Lemma~\ref{b3}, $\rho(H)<\rho(B_3^1(a,b+d,c))<\rho (B_2(m-4,1))$ for $b+d\ge1$. If $a=b=c=0$, then $H\cong B_4$. By Lemma~\ref{bgraph}, $\rho(H)=\rho(B_4)<\rho (B_2(m-4,1))$.

{\bf Case 2.} $H$ has at least three non-pendent edges.

{\bf Subcase 2.1} All non-pendent vertices of $H$ are in one edge, say $f$.
%
According to the discussion within the proof of Lemma~\ref{bi}, we can find two non-pendent vertices $v_1$ and $v_2$ that do not share any edge other than $f$.  Moving all edges incident with $v_2$ except $f$ from $v_2$ to $v_1$, we obtain from $H$ a $k$-graph $H'\in \mathbb{B}_{i-1}^{m}$ which has the same number of non-pendent edges as $H$ does. Then $H'$ has at least three non-pendent edges, and thus $H'\ncong B_3^1(m-2,0,0)$.
By Corollary~\ref{pm} and Lemma~\ref{b3},   we have $\rho(H)<\rho(H')<\rho (B_2(m-4,1)).$

{\bf Subcase 2.2} There exists two non-pendent vertices $v_1,v_2$ in $H$ that do not have any common edges.

Let $P=v_1e_1\cdots e_sv_2$ be a shortest path connecting $v_1$ and $v_2$ where $s\ge 2$. Denote by $H_1$ the $k$-graph obtained from $H$ by moving all edges incident with $v_1$ except $e_1$ from $v_1$ to $v_2$.  Denote by $H_2$ the $k$-graph obtained from $H$ by moving all edges incident with $v_2$ except $e_s$ from $v_2$ to $v_1$. Obviously $H_1$ and $H_2$ are in $\mathbb{B}_3^{m}$.
 Next we prove that they are not $B_3^1(m-2,0,0)$.

 If there is a pendent edge attaching at $v_1$ or $v_2$ in $H$, then in $H_1$, the shortest path connecting $v_1$ and an arbitrary pendent vertex in a pendent edge attached at $v_2$ is of length at least $3$. This implies that $H_1\ncong B_3^1(m-2,0,0)$, as the maximum length over all paths in $B_3^1(m-2,0,0)$ is $2$. Similarly we have $H_2\ncong B_3^1(m-2,0,0)$.

Suppose that $v_1$ and $v_2$ are not in any pendent edge. Then each of $v_1,v_2$ is incident with a non-pendent edge other than $e_1$ and $e_s$, say $f_1$ and $f_2$ respectively. Thus there are three edges $(f_1\backslash \{v_1\})\cup \{v_2\}$, $f_2$ and $e_s$  being non-pendent  in $H_1$, which implies that $H_1$ is distinct with $B_3^1(m-2,0,0)$ where only two edges being non-pendent. Similarly we have $H_2\ncong B_3^1(m-2,0,0)$.

Thus by Lemmas~\ref{e} and \ref{b3},
$$\rho(H)<\max\{\rho(H_1),\rho(H_2)\}<\rho (B_2(m-4,1)).$$
%

Now the proof is completed.
\end{proof}

By Lemmas~\ref{bi} and \ref{b4}, we have:

\begin{Lemma}\label{b5}
Let $H$ be a $k$-graph in $ \mathbb{B}_i^{m}$, where $i\ge 4$ and $m\ge 5$. Then
$$\rho(H)<\rho (B_2(m-4,1)).$$
\end{Lemma}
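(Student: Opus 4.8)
The plan is to prove the statement by induction on the number $i$ of non-pendent vertices, using Lemma~\ref{b4} as the base case and Lemma~\ref{bi} as the engine of the inductive step. The one structural fact that makes this clean is that, for fixed $m$ and $i$, the class $\mathbb{B}_i^{m}$ consists of only finitely many (isomorphism classes of) hypergraphs, so any maximum of spectral radii taken over such a class is actually attained at some member of it.

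First I would record the base case $i=4$, which is precisely the content of Lemma~\ref{b4}: for every $H\in\mathbb{B}_4^{m}$ with $m\ge 5$ one has $\rho(H)<\rho(B_2(m-4,1))$.

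Next I would carry out the inductive step. Fix $i\ge 5$ and assume the claim holds for $\mathbb{B}_{i-1}^{m}$ (legitimate since $i-1\ge 4$). Let $H\in\mathbb{B}_i^{m}$. By Lemma~\ref{bi},
$$\rho(H)<\max\{\rho(F): F\text{ is a }k\text{-graph in }\mathbb{B}_{i-1}^{m}\}.$$
Because $\mathbb{B}_{i-1}^{m}$ is finite, this maximum is attained at some $F_0\in\mathbb{B}_{i-1}^{m}$, and the inductive hypothesis gives $\rho(F_0)<\rho(B_2(m-4,1))$. Chaining the two inequalities yields $\rho(H)<\rho(F_0)<\rho(B_2(m-4,1))$, which completes the induction and hence the proof. (When $\mathbb{B}_i^{m}$ happens to be empty for a given $i$, the statement is vacuously true, so no separate argument is needed there.)

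I do not anticipate any genuine obstacle in this final step: the substantive work has already been done, namely the perturbation of Lemma~\ref{bi} that strictly increases the spectral radius while reducing the number of non-pendent vertices, and the four-case analysis of Lemma~\ref{b4} for hypergraphs with four non-pendent vertices. The only points requiring minor care are the finiteness remark that lets one pass from the maximum to an actual maximizer, and the bookkeeping ensuring the induction index never descends below the base case $i=4$, so that Lemma~\ref{b4} always anchors the chain.
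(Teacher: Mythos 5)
Your proof is correct and matches the paper's route exactly: the paper derives Lemma~\ref{b5} as an immediate consequence of Lemma~\ref{b4} (the base case $i=4$) and Lemma~\ref{bi} (the descent step), which is precisely your induction on $i$. Your added remarks on finiteness of $\mathbb{B}_{i-1}^{m}$ and the vacuous case are fine but routine; nothing further is needed.
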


\begin{Theorem}\label{bg}
Let $H$ be a $k$-graph in $ \mathbb{B}^{m}$ with $m\ge 5$. Then

$(i)$
$~\rho(B_2(m-4,1))<\rho(B_2(m-3,0))=\rho(B_3^1(m-2,0,0));$

$(ii)$ if $H \notin \{B_2(m-4,1),B_2(m-3,0),B_3^1(m-2,0,0)\}$,
then
$$\rho(H)<\rho(B_2(m-4,1)).$$
\end{Theorem}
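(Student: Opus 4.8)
Theorem \ref{bg} asks us to order the top three bicyclic hypergraphs. The plan is to prove part $(i)$ by direct computation via the weighted incidence matrix machinery, and part $(ii)$ by reducing an arbitrary $H \in \mathbb{B}^m$ to one of the already-analyzed families according to its number of non-pendent vertices.

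First I would dispatch part $(i)$. The equality $\rho(B_2(m-3,0))=\rho(B_3^1(m-2,0,0))$ is exactly Lemma \ref{bgraph}$(i)$, so nothing new is required there. For the strict inequality $\rho(B_2(m-4,1))<\rho(B_2(m-3,0))$, I would invoke Lemma \ref{b2} with $a=m-3$, $b=0$: since $B_2(m-4,1)$ arises from moving a pendent edge, Lemma \ref{b2} (which itself rests on the edge-moving Lemma \ref{e}) gives $\rho(B_2(m-4,1))<\rho(B_2(m-3,0))$ directly. Chaining these two facts yields part $(i)$ immediately.

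The substance is part $(ii)$. My approach is to stratify $\mathbb{B}^m$ by the number $i$ of non-pendent vertices, exactly as the preparatory lemmas were designed to support. For $i=2$, every such $H$ is some $B_2(a,b)$, and Lemma \ref{b2} forces $\rho(H)\le \rho(B_2(m-4,1))$ unless $H$ is $B_2(m-3,0)$ or $B_2(m-4,1)$ itself (these being the two excluded members), with strict inequality in all other cases. For $i=3$, Lemma \ref{b3} already establishes that every member of $\mathbb{B}_3^m$ other than $B_3^1(m-2,0,0)$ satisfies $\rho(H)<\rho(B_2(m-4,1))$, and $B_3^1(m-2,0,0)$ is precisely the third excluded hypergraph. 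For $i\ge 4$, Lemma \ref{b5} gives $\rho(H)<\rho(B_2(m-4,1))$ outright. So the body of the proof is just the bookkeeping of these three cases against the excluded set, and verifying that the only hypergraphs escaping the strict bound are exactly the three listed.

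I expect the main obstacle to be less a matter of deep technique than of case completeness: one must confirm that the classification of $\mathbb{B}_2^m$, $\mathbb{B}_3^m$ and $\mathbb{B}_i^m$ ($i\ge 4$) given just before the lemmas is exhaustive, so that no bicyclic $H$ slips through unaccounted. The subtle point is ensuring that the three exceptional graphs $B_2(m-4,1)$, $B_2(m-3,0)$, $B_3^1(m-2,0,0)$ are genuinely the only ones meeting or exceeding the threshold $\rho(B_2(m-4,1))$; this hinges on the strictness assertions in Lemmas \ref{b2}, \ref{b3}, and \ref{b5}, all of which must hold at $m\ge 5$. Once those strict inequalities are in hand, assembling part $(ii)$ is essentially immediate, and part $(i)$ orders the three survivors.
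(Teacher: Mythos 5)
Your proposal is correct and takes essentially the same approach as the paper: part $(i)$ follows from Lemma~\ref{bgraph}~$(i)$ together with Lemma~\ref{b2}, and part $(ii)$ is obtained by stratifying $\mathbb{B}^{m}$ according to the number of non-pendent vertices and citing Lemmas~\ref{b2}, \ref{b3} and \ref{b5}, which is exactly what the paper does. One small slip: Lemma~\ref{b2} must be invoked with $a=m-4$, $b=1$ (its hypothesis requires $a\ge b\ge 1$), not with $a=m-3$, $b=0$; with that instantiation the conclusion $\rho(B_2(m-4,1))<\rho(B_2(m-3,0))$ reads off directly.
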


\begin{proof}
The relation in $(i)$ follows directly from Lemmas~\ref{bgraph} and \ref{b2}.

The inequality of $(ii)$ can be obtained from Lemmas~\ref{b2}, \ref{b3} and \ref{b5} by specifying the number of non-pendent vertices in $H$.

Then the proof is completed.
\end{proof}


\end{document}